\newtheorem{thm}{Theorem}[section]
\newtheorem{lem}[thm]{Lemma}
\newtheorem{cor}[thm]{Corollary}
\newtheorem{defi}{Definition}
\newcommand{\comment}[1]{} 
\begin{document}
\title{Recent results on Choi's orthogonal Latin squares}

\author{Jon-Lark Kim \\ Department of Mathematics \\ Sogang University  \\ Seoul, 121-742, South Korea \\
{\tt jlkim@sogang.ac.kr } \thanks{Corresponding authors. J.-L. Kim was supported by Basic Research Program through the National Research Foundation of Korea (NRF) funded by the Ministry of Education (NRF-2019R1I1A1A01057755).} \\ \\ Dong Eun Ohk \\ Department of Mathematics \\ Sogang University  \\ Seoul, 121-742, South Korea \\
  \\ \\ Doo Young Park \\ Department of Mathematics \\ Sogang University  \\ Seoul, 121-742, South Korea \\  \\ \\ Jae Woo Park \\ Department of Mathematics \\ Sogang University  \\ Seoul, 121-742, South Korea \\
}

\date{09/08/2021}

\maketitle

\begin{abstract}
\noindent Choi Seok-Jeong studied Latin squares at least 60 years earlier than Euler although this was less known. He introduced a pair of orthogonal Latin squares of order 9 in his book. Interestingly, his two orthogonal non-double-diagonal Latin squares produce a magic square of order 9, whose theoretical reason was not studied. There have been a few studies on Choi's Latin squares of order 9. The most recent one is Ko-Wei Lih's construction of Choi's Latin squares of order 9 based on the two $3 \times 3$ orthogonal Latin squares.
 In this paper, we give a new generalization of Choi's orthogonal Latin squares of order 9 to orthogonal Latin squares of size $n^2$ using the Kronecker product including Lih's construction. We find a geometric description of Choi's orthogonal Latin squares of order 9
  using the dihedral group $D_8$. We also give a new way to construct magic squares from two orthogonal non-double-diagonal Latin squares, which explains why Choi's Latin squares produce a magic square of order 9.
  \end{abstract}

{\bf{Key Words:}} Choi Seok-Jeong, Koo-Soo-Ryak, Latin squares, magic squares

{\bf AMS subject classification}: 05B15, 05B20

\section{Introduction}

A \textit{Latin square} of order $n$ is an $n\times n$ array in which $n$ distinct symbols are arranged so that each symbol occurs once in each row and column. This Latin square is one of the most interesting mathematical objects. It can be applied to a lot of branches of discrete mathematics including finite geometry, coding theory and cryptography~\cite{a3},~\cite{a1}. In particular, orthogonal Latin squares have been one of the main topics in Latin squares. The superimposed pair of two orthogonal Latin squares is also called a \textit{Graeco-Latin sqaure} by Leonhard Euler (1707-1783) in 1776~\cite{Euler}. It is known that the study of Latin squares was researched by Euler in the 18th century. However the Korean mathematician, Choi Seok-Jeong [Choi is a family name] (1646-1715) already studied Latin squares at least 60 years before Euler's work. A pair of two orthogonal Latin squares of order 9 was introduced in Koo-Soo-Ryak (or Gusuryak) written by Choi Seok-Jeong. The Koo-Soo-Ryak was listed as the first literature on Latin squares in the Handbook of Combinatorial Designs~\cite{ColDinitz}.


Let K be the matrix form of the superimposed Latin square of order 9 from Koo-Soo-Ryak:

$$K \; = \;
\begin{matrix}
(5,1) & (6,3) & (4,2) & (8,7) & (9,9) & (7,8) & (2,4) & (3,6) & (1,5)\\
(4,3) & (5,2) & (6,1) & (7,9) & (8,8) & (9,7) & (1,6) & (2,5) & (3,4)\\
(6,2) & (4,1) & (5,3) & (9,8) & (7,7) & (8,9) & (3,5) & (1,4) & (2,6)\\
(2,7) & (3,9) & (1,8) & (5,4) & (6,6) & (4,5) & (8,1) & (9,3) & (7,2)\\
(1,9) & (2,8) & (3,7) & (4,6) & (5,5) & (6,4) & (7,3) & (8,2) & (9,1)\\
(3,8) & (1,7) & (2,9) & (6,5) & (4,4) & (5,6) & (9,2) & (7,1) & (8,3)\\
(8,4) & (9,6) & (7,5) & (2,1) & (3,3) & (1,2) & (5,7) & (6,9) & (4,8)\\
(7,6) & (8,5) & (9,4) & (1,3) & (2,2) & (3,1) & (4,9) & (5,8) & (6,7)\\
(9,5) & (7,4) & (8,6) & (3,2) & (1,1) & (2,3) & (6,8) & (4,7) & (5,9)\\
\end{matrix}$$
Then we can separate $K$ into two Latin squares $L$ and $N$. To get a visible effect, let us color in each square.

\begin{figure}[h]
\centering
\includegraphics[scale=0.35]{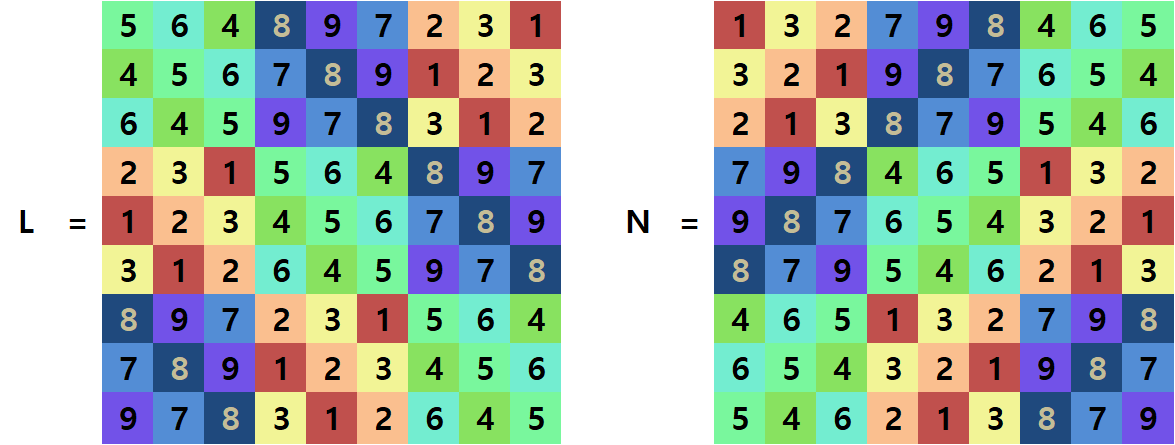}
\caption{A colored two Latin squares $L$ and $N$, respectively}
\end{figure}

We paint colors for each numbers, $1, 2, \cdots, 9$. In details, $1, 2, 3$ are colored in red, $4, 5, 6$ are colored in green, and $7, 8, 9$ are colored in blue. Then we observe that the Latin squares have self-repeating patterns. This simple structure of Choi's Latin squares motivates some generalization of his idea. We generalize Choi's Latin squares in three directions: the Kronecker product approach, the Dihedral group approach, and magic squares from Choi's Latin squares.

In this paper, we give a new generalization of Choi's orthogonal Latin squares of order 9 to orthogonal Latin squares of size $n^2$ using the Kronecker product including Lih's construction~\cite{a1}. There has been some attempt that the dihedral group $D_8$ acts on the Latin squares~\cite{a7}. We find a geometric description of Choi's orthogonal Latin squares of order 9 using $D_8$. We also give a new way to construct magic squares from two orthogonal non-double-diagonal Latin squares, which explains why Choi's Latin squares produce a magic square of order 9.
	
	


\section{A generalization of Choi's orthogonal Latin squares}

\begin{defi} \rm{(\cite{a1})}
Let $A=(a_{ij})$ be a Latin square of order $n (i,j\in \{1,2,\cdots,n\})$ and $B=(b_{st})$ be a Latin square of order $m (s,t\in \{1,2,\cdots,m\})$. Then the \textit{Kronecker product of $A$ and $B$}, which is an $mn\times mn$ square $A\otimes B$ given by
$$A\otimes B \, = \;
\begin{matrix}
(a_{11}, B) & (a_{12}, B) & \cdots & (a_{1n}, B)\\
(a_{21}, B) & (a_{22}, B) & \cdots & (a_{2n}, B)\\
\vdots & \vdots & \ddots & \vdots \\
(a_{n1}, B) & (a_{n2}, B) & \cdots & (a_{nn}, B)\\
\end{matrix}$$
where $(a_{ij},B)$ is the $m\times m$ square
$$(a_{ij},B) \, = \;
\begin{matrix}
(a_{ij}, b_{11}) & (a_{ij}, b_{12}) & \cdots & (a_{ij}, b_{1m})\\
(a_{ij}, b_{21}) & (a_{ij}, b_{22}) & \cdots & (a_{ij}, b_{2m})\\
\vdots & \vdots & \ddots & \vdots \\
(a_{ij}, b_{m1}) & (a_{ij}, b_{m2}) & \cdots & (a_{ij}, b_{mm})\\
\end{matrix}$$
\end{defi}

\begin{lem}\rm{(\cite{a1})}
$A\otimes B$ is a Latin square if $A$ and $B$ are both Latin squares.
\end{lem}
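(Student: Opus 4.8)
The plan is to unwind the definitions and check the two Latin-square axioms (each symbol once per row, each once per column) for the $mn \times mn$ array $A \otimes B$, whose entries are the ordered pairs $(a_{ij}, b_{st})$. First I would fix notation: index the rows of $A \otimes B$ by pairs $(i,s)$ with $i \in \{1,\dots,n\}$, $s \in \{1,\dots,m\}$, where $(i,s)$ means ``the $s$-th row inside the $i$-th block-row,'' and similarly index columns by pairs $(j,t)$. Then the entry of $A \otimes B$ in position $\big((i,s),(j,t)\big)$ is exactly the pair $(a_{ij}, b_{st})$. The symbol set is $\{1,\dots,n\} \times \{1,\dots,m\}$, which has $mn$ elements, matching the side length, so it suffices to show every symbol appears \emph{at least} once in each row and each column; injectivity then follows by counting.

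Next I would verify the row condition. Fix a row, i.e. fix $(i,s)$, and fix a target symbol $(p,q) \in \{1,\dots,n\} \times \{1,\dots,m\}$. Since $A$ is a Latin square, there is a unique column index $j$ with $a_{ij} = p$ (the value $p$ occurs exactly once in row $i$ of $A$). Since $B$ is a Latin square, there is a unique column index $t$ with $b_{st} = q$. Then the entry at position $\big((i,s),(j,t)\big)$ equals $(a_{ij}, b_{st}) = (p,q)$, so $(p,q)$ does occur in the chosen row; moreover $j$ and $t$ were each determined uniquely, so it occurs exactly once. The column condition is proved symmetrically, using that each value occurs exactly once in a fixed column of $A$ and of $B$ respectively.

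The argument is essentially a bookkeeping exercise, so I do not expect a genuine obstacle; the only thing requiring a little care is the indexing convention that identifies a row of the $mn \times mn$ array with a pair (block-row, inner-row), and the observation that under this identification the Kronecker-product entry factors as the pair of the corresponding $A$-entry and $B$-entry. Once that correspondence is stated cleanly, the uniqueness of $j$ (from the Latin property of $A$) and of $t$ (from the Latin property of $B$) immediately gives both existence and uniqueness of each symbol in each line. I would close by noting that the same block-structure argument will be reused when we later prove that $A \otimes B$ is orthogonal to $C \otimes D$ whenever $A \perp C$ and $B \perp D$, so it is worth setting up the $(i,s) \leftrightarrow$ row dictionary carefully here.
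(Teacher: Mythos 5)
Your argument is correct: indexing rows by (block-row, inner-row) pairs and observing that the entry at position $\big((i,s),(j,t)\big)$ is $(a_{ij},b_{st})$, the Latin property of $A$ gives a unique $j$ with $a_{ij}=p$ and that of $B$ a unique $t$ with $b_{st}=q$, which settles both rows and columns. The paper itself states this lemma as a known result cited from the literature and gives no proof, so there is nothing to compare against; your write-up is the standard textbook argument and would serve as a valid proof.
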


\begin{thm}\rm{(\cite{a1})}~\label{thm:knonecker}
If two Latin squares $A_{1}$ and $A_{2}$ of order $n$ are orthogonal and two Latin squares $B_{1}$ and $B_{2}$ of order $m$ are orthogonal, then $A_{1}\otimes B_{1}$ and $A_{2}\otimes B_{2}$ of order $mn$ are orthogonal.
\end{thm}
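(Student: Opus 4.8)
The plan is to verify the orthogonality condition directly from the definition by tracking what happens to an ordered pair of cells. Recall that $A_1 \otimes B_1$ and $A_2 \otimes B_2$ are both Latin squares of order $mn$ by the preceding lemma, so it remains only to check that when we superimpose them, every ordered pair of symbols occurs exactly once. Since there are $(mn)^2$ cells and $(mn)^2$ possible ordered pairs of symbols, it suffices to show that the map from cells to ordered pairs is injective, i.e. that no ordered pair occurs twice.

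First I would set up coordinates. Index the rows and columns of an order-$mn$ Kronecker product by pairs $(i,s)$ with $i \in \{1,\dots,n\}$ and $s \in \{1,\dots,m\}$, so that the cell in ``block row $i$, block column $j$, inner row $s$, inner column $t$'' carries the symbol $(a^{(k)}_{ij}, b^{(k)}_{st})$ in $A_k \otimes B_k$ for $k = 1,2$. Superimposing the two squares, the cell $((i,s),(j,t))$ receives the quadruple
\[
\bigl( (a^{(1)}_{ij}, b^{(1)}_{st}),\ (a^{(2)}_{ij}, b^{(2)}_{st}) \bigr),
\]
which we may regroup as the pair of ordered pairs $\bigl( (a^{(1)}_{ij}, a^{(2)}_{ij}),\ (b^{(1)}_{st}, b^{(2)}_{st}) \bigr)$. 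Now suppose two cells $((i,s),(j,t))$ and $((i',s'),(j',t'))$ produce the same quadruple. Then $(a^{(1)}_{ij}, a^{(2)}_{ij}) = (a^{(1)}_{i'j'}, a^{(2)}_{i'j'})$, and since $A_1$ and $A_2$ are orthogonal, the map $(i,j) \mapsto (a^{(1)}_{ij}, a^{(2)}_{ij})$ is a bijection on the $n^2$ block-positions, forcing $(i,j) = (i',j')$. Likewise $(b^{(1)}_{st}, b^{(2)}_{st}) = (b^{(1)}_{s't'}, b^{(2)}_{s't'})$ together with orthogonality of $B_1$ and $B_2$ gives $(s,t) = (s',t')$. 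Hence the two cells coincide.

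This establishes injectivity of the cell-to-pair map, and a counting argument (equal finite cardinalities) upgrades it to a bijection, so every ordered pair of symbols occurs exactly once; therefore $A_1 \otimes B_1$ and $A_2 \otimes B_2$ are orthogonal. The only mild subtlety — and the step I would be most careful about — is the bookkeeping that identifies the symbol set of $A_k \otimes B_k$ with the product set and confirms that an equality of superimposed entries really does decouple into the ``$A$-part'' and the ``$B$-part''; once the indexing conventions of the Kronecker product are pinned down precisely, this is immediate, and the orthogonality hypotheses on each factor plug in cleanly. No deeper obstacle arises: the argument is essentially the observation that the Kronecker construction realizes the product of two Graeco-Latin squares as a Graeco-Latin square over the product alphabet.
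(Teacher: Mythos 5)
Your proof is correct: the regrouping of the superimposed entry into the pair $\bigl((a^{(1)}_{ij},a^{(2)}_{ij}),(b^{(1)}_{st},b^{(2)}_{st})\bigr)$, the use of orthogonality of each factor pair to force $(i,j)=(i',j')$ and $(s,t)=(s',t')$, and the counting step that upgrades injectivity to a bijection are exactly what is needed. Note that the paper itself states this theorem as a cited result (from Laywine and Mullen) and gives no proof, so there is nothing internal to compare against; your argument is the standard one from that source and fills the gap cleanly.
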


Now it is natural to substitute $m(a_{ij}-1) + b_{kl}$ for the entry $(a_{ij}, b_{kl})$ in $A\otimes B$. Thus we define the \textit{substituted Kronecker product $\otimes_S$ of two Latin squares $A$ and $B$} by the following block matrix
$$A\otimes_S B \, = \;
\begin{bmatrix}
(m(a_{11}-1)\times N_{m}+B) & \cdots & (m(a_{1n}-1)\times N_{m}+B)\\
\vdots & \ddots & \vdots \\
(m(a_{n1}-1)\times N_{m}+B) & \cdots & (m(a_{nn}-1)\times N_{m}+B)\\
\end{bmatrix}$$
where $A=(a_{ij})$ is a matrix of order $n$, $B$ is a matrix of order $m$, and $N_{m}$ is the $m\times m$ all-ones matrix.

Let us return to Latin squares. Judging from Figure 1, we can expect that $L$ is closely related to a Latin square of order 3. Let
$$A_3  = (a_{ij})=
\begin{matrix}
2 & 3 & 1\\
1 & 2 & 3\\
3 & 1 & 2\\
\end{matrix}$$

Then the following block matrix
$$\begin{bmatrix}
(3(a_{11}-1)\times N_{3}+A_3) & (3(a_{12}-1)\times N_{3}+A_3) & (3(a_{13}-1)\times N_{3}+A_3)\\
(3(a_{21}-1)\times N_{3}+A_3) & (3(a_{22}-1)\times N_{3}+A_3) & (3(a_{23}-1)\times N_{3}+A_3)\\
(3(a_{31}-1)\times N_{3}+A_3) & (3(a_{32}-1)\times N_{3}+A_3) & (3(a_{33}-1)\times N_{3}+A_3)\\	
\end{bmatrix}$$
produces $L$. In other words, $L=A_3\otimes_S A_3$. Similarly, let
$$B_3 \, = \;
\begin{matrix}
1 & 3 & 2\\
3 & 2 & 1\\
2 & 1 & 3\\
\end{matrix}$$
then $N=B_3\otimes_S B_3$. These two Latin squares $A_3$ and $B_3$ are elements of MOLS(3) which is the mutually orthogonal Latin squares of order 3. We recall that Lih~\cite{a2} also found this relation. However he did not explain why
$L=A_3\otimes_S A_3$ and $N=B_3\otimes_S B_3$ are orthogonal from the Kronecker product point of view.

\begin{cor}
Choi's two Latin squares of order 9 are orthogonal.
\end{cor}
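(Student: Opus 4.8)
The plan is to deduce the corollary directly from Theorem \ref{thm:knonecker}, after observing that the substituted Kronecker product $\otimes_S$ differs from the plain Kronecker product $\otimes$ only by a relabeling of entries. Concretely, the map $\phi\colon\{1,\dots,n\}\times\{1,\dots,m\}\to\{1,\dots,mn\}$ defined by $\phi(a,b)=m(a-1)+b$ is a bijection, and by construction $A\otimes_S B$ is obtained from $A\otimes B$ by replacing each entry $(a_{ij},b_{kl})$ with $\phi(a_{ij},b_{kl})$. Since applying a fixed bijection to the symbol set of a Latin square again yields a Latin square, and since orthogonality of a pair of Latin squares is unaffected when the symbols of each square are relabeled by (possibly different) bijections, the pair $A\otimes_S B$, $C\otimes_S D$ is orthogonal whenever $A\otimes B$, $C\otimes D$ is orthogonal.

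First I would record that $A_3$ and $B_3$ are Latin squares of order $3$ (immediate from inspection) and that they are orthogonal: superimposing $A_3$ on $B_3$ gives
$$\begin{matrix}
(2,1) & (3,3) & (1,2)\\
(1,3) & (2,2) & (3,1)\\
(3,2) & (1,1) & (2,3)\\
\end{matrix}$$
in which each of the nine ordered pairs in $\{1,2,3\}^2$ occurs exactly once. Next, applying Theorem \ref{thm:knonecker} with $A_1=B_1=A_3$ and $A_2=B_2=B_3$ (all of order $3$), the orthogonality of $A_3$ and $B_3$ yields that $A_3\otimes A_3$ and $B_3\otimes B_3$ are orthogonal Latin squares of order $9$. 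By the relabeling observation above, applied with $n=m=3$ and the same bijection $\phi$ on both squares, $A_3\otimes_S A_3$ and $B_3\otimes_S B_3$ are orthogonal as well. Finally, the two block-matrix computations displayed just before the corollary show $L=A_3\otimes_S A_3$ and $N=B_3\otimes_S B_3$, and $L$, $N$ are precisely the two Latin squares extracted from Choi's superimposed square $K$; hence Choi's two Latin squares of order $9$ are orthogonal.

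The only real content beyond invoking Theorem \ref{thm:knonecker} is the observation that $\otimes_S$ and $\otimes$ produce arrays that are equal up to a relabeling of entries, so the main point requiring care is making that relabeling argument precise — in particular, the fact that orthogonality is preserved under independently relabeling the symbols of the two squares (here we even use the same bijection $\phi$ on both, so the weaker form of this fact suffices). The remaining ingredients — verifying that $A_3$ and $B_3$ form an orthogonal pair, and checking the two block-matrix identities for $L$ and $N$ — are finite, routine computations.
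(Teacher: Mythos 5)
Your proof is correct and follows essentially the same route as the paper: identify $L=A_3\otimes_S A_3$ and $N=B_3\otimes_S B_3$, check that $A_3$ and $B_3$ are orthogonal, and apply Theorem~\ref{thm:knonecker} with $A_1=B_1=A_3$, $A_2=B_2=B_3$. The only difference is that you make explicit the (correct) observation that $\otimes_S$ is just $\otimes$ followed by the relabeling $\phi(a,b)=m(a-1)+b$, a step the paper's proof leaves implicit, so your write-up is if anything slightly more careful than the original.
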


\begin{proof}
By the above notation, we can put Choi's two Latin squares of order 9 by
 $L=A_3\otimes_S A_3$ and $N=B_3 \otimes_S B_3$. Note that $A_3$ and $B_3$ are orthogonal. Therefore, by taking $A_1=B_1=A_3$ and $A_2=B_2=B_3$ in Theorem~\ref{thm:knonecker} we see that
  $L=A_3\otimes_S A_3$ and $N=B_3\otimes_S B_3$ are also orthogonal.
\end{proof}

Hence it appears that Choi might know how to get the orthogonal Latin squares of order 9 by expanding orthogonal Latin squares of order 3.

It is natural to generalize Choi's approach to obtain orthogonal Latin squares by copying a smaller Latin square several times.

If $A$ is a Latin square of order $n$, we call $A \otimes_S A$ {\em Choi type Latin square of order $n^2$}.

Since there exists a pair of orthogonal Latin squares of order $n \ge 3$ and $n \ne 6$, the following is immediate.

\begin{cor}\label{cor-Choi's type}
There exists a pair of Choi's type Latin squares of order $n^2$ which are orthogonal whenever $n \ge 3$ and $n \ne 6$.
\end{cor}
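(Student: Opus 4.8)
The plan is to combine the classical existence theorem for pairs of orthogonal Latin squares with the Kronecker-product construction of Theorem~\ref{thm:knonecker}. First I would invoke the Bose--Shrikhande--Parker theorem (the disproof of Euler's conjecture): for every integer $n$ with $n \ge 3$ and $n \ne 6$ there is a pair of orthogonal Latin squares of order $n$. Fix such a pair, say $A_1$ and $A_2$, both of order $n$.

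Next I would apply Theorem~\ref{thm:knonecker} with the special choice $B_1 = A_1$ and $B_2 = A_2$. Since $A_1$ and $A_2$ are orthogonal as both the ``outer'' and the ``inner'' factors, the theorem yields that $A_1 \otimes A_1$ and $A_2 \otimes A_2$ are orthogonal Latin squares of order $n^2$. The only thing to verify is that this is preserved when the ordered-pair product $\otimes$ is replaced by the substituted product $\otimes_S$: the assignment $(a,b) \mapsto n(a-1) + b$ is a bijection from $\{1,\dots,n\}^2$ onto $\{1,\dots,n^2\}$, so it carries a Latin square to a Latin square and an orthogonal pair to an orthogonal pair. This is exactly the bookkeeping already performed in the proof that Choi's two squares of order $9$ are orthogonal, so nothing new is required.

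Finally, by the definition introduced immediately before the statement, $A_1 \otimes_S A_1$ and $A_2 \otimes_S A_2$ are Choi's type Latin squares of order $n^2$, and we have just shown them to be orthogonal; hence the pair $(A_1 \otimes_S A_1,\ A_2 \otimes_S A_2)$ proves the corollary. I do not anticipate any genuine obstacle: the content is entirely carried by the Bose--Shrikhande--Parker theorem, while the substitution step is routine and has already been treated in the $n=3$ case. The restriction $n \ne 6$ is precisely the one forced by the nonexistence of a pair of orthogonal Latin squares of order $6$.
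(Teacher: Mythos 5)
Your proposal is correct and follows essentially the same route as the paper: the paper also treats the corollary as immediate from the Bose--Shrikhande--Parker existence of an orthogonal pair of order $n$ ($n \ge 3$, $n \ne 6$) together with Theorem~\ref{thm:knonecker} applied with the same pair as inner and outer factors, exactly as in the order-$9$ case. Your explicit remark that the substitution $(a,b) \mapsto n(a-1)+b$ is a bijection preserving orthogonality is the same bookkeeping the paper relies on implicitly.
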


We remark that Lih's construction~\cite{a2} gives only the case when $n=3$. Corollary~\ref{cor-Choi's type} extends this result to any $n \ge 3$ and $n \ne 6$.


\section{Latin squares acted by the dihedral group $D_8$}

We have noticed that $L$ is symmetric to $N$ with respect to the 5th column of $L$. In other word, if we let $L=(l_{ij})$, then $N=(l_{i(n+1-j)})$. So we define some operation.

\begin{defi}{\em
Let $A=(a_{ij})$ be an $n\times n$ matrix (or square or array). Define the $n\times n$ matrix $s_{2}(A)$ by $s_{2}(A)=(a_{i(n+1-j)})$.
}
\end{defi}

We can consider more symmetries. The \textit{dihedral group of degree $n$} denoted by $D_{2n}$ is a well-known group of order $2n$ consisting of symmetries on a regular $n$-polygon consisting rotations and reflections. In this case, we concentrate on a square, so the dihedral group of order 8, denoted by $D_{8}$, is needed. In $D_{8}$, there are eight elements, $s_{0},s_{1},s_{2},s_{3},r_{1},r_{2},r_{3},r_{4}$. Note that $s_{i}$ for $i=0, 1, 2, 3$ denotes a reflection. More precisely, $s_0$ is a horizontal reflection, $s_1$ is a main diagonal reflection, $s_2$ is a vertical reflection, and $s_3$ is an antidiagonal reflection. Note that
$r_0$ denotes the rigid motion and $r_i$'s $(i=1,2,3)$ denote  counterclockwise rotations by 90, 180, 270 degrees respectively so that  $r_2 = r_1^2$ and $r_3=r_1^3$.

We can define a set $D_8(A)=\{A, r_{1}(A), r_{2}(A), r_{3}(A), s_{0}(A), s_{1}(A), s_{2}(A), s_{3}(A)\}$ for a given Latin square $A$.

\begin{defi}\label{def:sigma}{\em
Let $L_{n}$ be the set of all Latin squares of order $n$. Then $\sigma\in D_{8}$ is a function with $\sigma\ : L_{n}\rightarrow L_{n}$ defined by
\begin{center}
$r_{0}(A)=(a_{ij})$, \; $r_{1}(A)=(a_{j(n+1-i)})$, \; $r_{2}(A)=(a_{(n+1-i)(n+1-j)})$, \\
$r_{3}(A)=(a_{(n+1-j)i})$, \; $s_{0}(A)=(a_{(n+1-i)j})$, \; $s_{1}(A)=(a_{ji})$ , \\
$s_{2}(A)=(a_{i(n+1-j)})$, \; $s_{3}(A)=(a_{(n+1-j)(n+1-i)})$
\end{center}
where $A \in L_n$ and $A=(a_{ij})$.
}
\end{defi}

Then we can regard an element in $D_{8}$ as a function acting on $L_n$. In fact, the dihedral group $D_8$ acts on $L_n$ (or $L_n$ is a $D_8$-set) as follows.
	
\begin{lem}
$L_n$ is a $D_8$-set.
\end{lem}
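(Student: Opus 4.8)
The plan is to verify the two group-action axioms: that the identity element $r_0$ acts as the identity map on $L_n$, and that the action is compatible with composition, i.e. $(\sigma\circ\tau)(A)=\sigma(\tau(A))$ for all $\sigma,\tau\in D_8$ and all $A\in L_n$. There is also the implicit well-definedness requirement — namely that each $\sigma\in D_8$ really maps $L_n$ into $L_n$ — which I would dispatch first: applying any of the eight index permutations in Definition~\ref{def:sigma} to a Latin square permutes its rows, its columns, or transposes it (possibly in combination), and each of these operations sends Latin squares to Latin squares since "each symbol occurs once in each row and column" is preserved under permuting rows, permuting columns, and swapping the roles of rows and columns.

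The identity axiom is immediate from the definition: $r_0(A)=(a_{ij})=A$. For the compatibility axiom, the cleanest route is to represent each element of $D_8$ by its effect on the index pair $(i,j)$. Writing $\bar{k}=n+1-k$, one checks that $r_1$ sends the entry in position $(i,j)$ to read $a_{j\bar i}$, and so on for each generator; composing two such index substitutions and simplifying (using $\bar{\bar k}=k$) yields exactly the index substitution listed for the product element. Rather than grinding through all $64$ pairs, I would reduce the work by using that $D_8$ is generated by $r_1$ and $s_0$: it suffices to check $r_1(r_1(A))=r_2(A)$, $r_1(r_2(A))=r_3(A)$, $r_1(r_3(A))=r_0(A)$, $s_0(s_0(A))=r_0(A)$, and the single "braid-type" relation $s_0(r_1(A))=r_3(s_0(A))$ (equivalently $r_1\circ s_0 = s_0\circ r_3$, matching the group table entry $s_0\circ r_1 = s_3$ after translating). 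Since the stated maps already assign the correct $\sigma$ to every element, once these generating relations are verified, every other entry of the $D_8$ multiplication table follows formally from group theory, so the action is well-defined on all of $D_8$.

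Concretely, for the relation $s_0(r_1(A))=r_3(s_0(A))$: by definition $r_1(A)=(c_{ij})$ with $c_{ij}=a_{j\bar i}$, so $s_0(r_1(A))=(c_{\bar i j})=(a_{j\,\overline{\bar i}})=(a_{j i})$... wait — I would recompute carefully here, since the direction of the index substitution is the one place sign/order errors creep in, and the correct statement is that $s_0(r_1(A))$ has $(i,j)$-entry $a_{\bar j i}$ while $r_3(s_0(A))$ has $(i,j)$-entry obtained by first forming $s_0(A)=(a_{\bar i j})=:(d_{ij})$ and then $r_3$ of that, namely $d_{\bar j i}=a_{\overline{\bar j}\, i}=a_{j i}$; reconciling these two requires tracking whether $D_8$ acts on positions or on labels, and I would fix that convention explicitly at the outset. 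This bookkeeping of "active versus passive" index transformations is the one genuine obstacle; everything else is a routine substitution of $\bar{\bar k}=k$ and $\overline{k+\ell}$-type identities. Once the convention is pinned down and the five generating relations are checked, we conclude that $D_8\times L_n\to L_n$, $(\sigma,A)\mapsto\sigma(A)$, is a group action, i.e. $L_n$ is a $D_8$-set.
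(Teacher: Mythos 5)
Your outline coincides with the paper's own proof: show that each $\sigma\in D_8$ sends Latin squares to Latin squares, that $r_0$ acts identically, and that composition is respected; the paper disposes of the last point with ``it is straightforward to check,'' so the whole added value of your proposal lies in the explicit verification --- and that is exactly where your text stalls, on the one computation you attempt. Let me resolve it. Writing $k^{*}=n+1-k$, Definition~\ref{def:sigma} gives $[r_1(A)]_{ij}=a_{j\,i^{*}}$, hence $[s_0(r_1(A))]_{ij}=a_{j\,(i^{*})^{*}}=a_{ji}$: your \emph{first} computation was correct, and the ``corrected'' claim that the $(i,j)$ entry is $a_{j^{*} i}$ is the error (that formula is $r_3(A)$). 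Likewise $[r_3(s_0(A))]_{ij}=a_{(j^{*})^{*}\,i}=a_{ji}$, so both composites equal $s_1(A)$ and the dihedral relation you set out to check does hold; there was no obstruction to finishing. The convention worry you raise is genuine, but only in the following sense: composing the maps of Definition~\ref{def:sigma} as functions matches the paper's group table only if the table product is read as ``row element applied first'' (function composition gives $s_0(r_1(A))=s_1(A)$ and $r_1(s_0(A))=s_3(A)$, whereas the table lists $s_0\circ r_1=s_3$ and $r_1\circ s_0=s_1$); equivalently, the entry formulas define a right action, which becomes a left action after composing with inversion. The lemma is unaffected either way, but a complete proof must fix one convention and carry the computation through rather than flag the issue and stop, since the compatibility axiom is the entire nontrivial content of the statement.

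A second, smaller gap is in your reduction to generators. Verifying $r_1^{4}=s_0^{2}=r_0$ together with one braid relation shows that the two maps assigned to $r_1$ and $s_0$ generate a homomorphic image of $D_8$; but to conclude that all $64$ products of the eight \emph{named} maps obey the table, you must also check that the maps called $r_2$, $r_3$, $s_1$, $s_2$, $s_3$ in Definition~\ref{def:sigma} coincide with the corresponding words in the two generators. You did this for the rotations ($r_1(r_1(A))=r_2(A)$, and so on), but for the reflections you only asserted that ``the stated maps already assign the correct $\sigma$'' --- which is precisely what needs proof. It costs three more one-line substitutions of the same kind, e.g.\ $[s_0(r_2(A))]_{ij}=a_{i\,j^{*}}=[s_2(A)]_{ij}$ and the computation above showing $s_0(r_1(A))=s_1(A)$, consistent with the paper's remark that $s_i=r_i\circ s_0$ under the ``row element first'' reading. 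With those three identities supplied and the convention fixed, your argument is complete and is essentially a fleshed-out version of the paper's; the well-definedness and identity steps are fine as written and agree with the paper's treatment.
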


\begin{proof}
Let $\sigma \in D_8$ and $A \in L_n$. Since $A=(a_{ij})$ is a Latin square of order $n$, $\left\{ a_{1j}, a_{2j}, \cdots, a_{nj} \right\}$ $=$ $\left\{ a_{i1}, a_{i2}, \cdots, a_{in} \right\}$ $=$ $\left\{ 1, 2, \cdots, n \right\}$ for all $i,j = 1, 2, \cdots, n$. Thus by definition, $\sigma(A)$ is a Latin square.

If $\sigma = r_0$, then $r_0(A)=A$ for any $A \in L_n$.
Suppose that $\sigma_1, \sigma_2 \in D_8$. Let $\sigma_3=\sigma_1 \circ \sigma_2 \in D_8$. It is straightforward to check that
$\sigma_3(A)=\sigma_1(\sigma_2(A))$ by Definition~\ref{def:sigma}.
\end{proof}

In the Choi's Latin squares, $N = s_{2}(L)$ (or $L = s_{2}(N)$). Since $L$ and $N$ are orthogonal, we can say that $L$ and $s_{2}(L)$ are orthogonal. Then we can have some questions. Is $L$ orthogonal to $\sigma(L)$ for another $\sigma$ in $D_8$? And how many mutually orthogonal Latin squares are in the set $D_8(L)$? Moreover, for any Latin square $A$, what is the maximum number of mutually orthogonal Latin squares in the set $D_8(A)$?

\begin{lem}\label{lem:sigma_orth}
Suppose $A$ and $B$ are Latin squares of order $n$ and take an arbitrary $\sigma \in D_8$. Then $A$ is orthogonal to $B$ if and only if $\sigma(A)$ is orthogonal to $\sigma(B)$.
\end{lem}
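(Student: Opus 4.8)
The plan is to prove the statement directly from the definition of orthogonality, exploiting the fact that each $\sigma \in D_8$ acts on the index set $\{1,\dots,n\}^2$ by a bijection. First I would recall that $A=(a_{ij})$ and $B=(b_{ij})$ are orthogonal precisely when the map $(i,j) \mapsto (a_{ij}, b_{ij})$ from $\{1,\dots,n\}^2$ to $\{1,\dots,n\}^2$ is a bijection (equivalently, surjective, equivalently injective, since both sets are finite of the same size). So the whole statement reduces to a bookkeeping fact about how $\sigma$ reindexes entries.

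The key observation is that every element $\sigma \in D_8$, as described in Definition~\ref{def:sigma}, has the form $\sigma(A) = (a_{\pi(i,j)})$ for a single permutation $\pi = \pi_\sigma$ of $\{1,\dots,n\}^2$ that does not depend on the matrix $A$. For instance $r_1$ uses $\pi(i,j) = (j, n+1-i)$, $s_2$ uses $\pi(i,j) = (i, n+1-j)$, and so on; each $\pi_\sigma$ is visibly a bijection of $\{1,\dots,n\}^2$. Hence the entrywise pair-map of $\sigma(A)$ and $\sigma(B)$ is $(i,j) \mapsto (a_{\pi(i,j)}, b_{\pi(i,j)})$, which is exactly the composition of the bijection $\pi$ with the pair-map of $A$ and $B$. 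A composition of two maps is a bijection if and only if — given that one of them ($\pi$) is already a bijection — the other is a bijection. Therefore the pair-map of $(A,B)$ is a bijection iff the pair-map of $(\sigma(A),\sigma(B))$ is, which is exactly the claim; and since $\sigma$ is invertible in $D_8$ the equivalence runs in both directions without extra work.

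So the steps, in order, are: (1) restate orthogonality of $A,B$ as ``$(i,j)\mapsto(a_{ij},b_{ij})$ is a bijection of $\{1,\dots,n\}^2$''; (2) observe that for each $\sigma\in D_8$ there is a fixed bijection $\pi_\sigma$ of $\{1,\dots,n\}^2$ with $\sigma(A)=(a_{\pi_\sigma(i,j)})$ and $\sigma(B)=(b_{\pi_\sigma(i,j)})$ simultaneously — this is read off directly from Definition~\ref{def:sigma}, the same $\pi_\sigma$ serving for both matrices; (3) note the pair-map for $(\sigma(A),\sigma(B))$ equals the pair-map for $(A,B)$ precomposed with $\pi_\sigma$; (4) conclude via the elementary fact that precomposition with a bijection preserves the property of being a bijection, giving the ``only if'' direction, and apply the same to $\sigma^{-1}$ (or just use that the argument is an equivalence) for the ``if'' direction.

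I do not expect a genuine obstacle here; the statement is essentially a change-of-variables remark. The only point requiring a modicum of care is step (2): one should either verify case by case that each of the eight formulas in Definition~\ref{def:sigma} indeed arises from a bijection of the index pairs (which is clear, as each is built from the involutions $i\mapsto n+1-i$ and the coordinate swap $(i,j)\mapsto(j,i)$), or remark once and for all that $D_8$ acts on $\{1,\dots,n\}^2$ by the geometric symmetries of the square grid, so each $\pi_\sigma$ is automatically a bijection. With that in hand the proof is a two-line argument, and it in particular re-proves that $L$ and $s_2(L)$ being orthogonal is equivalent to $L$ and $N$ being orthogonal, consistent with the discussion preceding the lemma.
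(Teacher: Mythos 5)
Your argument is correct: each $\sigma\in D_8$ acts by precomposing the entries with a fixed bijection $\pi_\sigma$ of the index set $\{1,\dots,n\}^2$ (the same one for $A$ and $B$), so the pair-map of $(\sigma(A),\sigma(B))$ is the pair-map of $(A,B)$ composed with $\pi_\sigma$, and bijectivity is preserved in both directions. The paper in fact states this lemma without proof, so your change-of-variables argument supplies exactly the routine verification the authors omitted, and it is consistent with how they use the lemma afterwards.
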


By Lemma~\ref{lem:sigma_orth}, we have a criteria when two Latin squares in the set $D_8(A)$ are orthogonal. If two Latin squares $A$ and $B$ are orthogonal, we denote it by $A\bot B$:

\begin{center}
$r_0(A)\bot s_0(A)$ $\iff$ $r_1(A)\bot s_1(A)$ $\iff$ $r_2(A)\bot s_2(A)$ $\iff$ $r_3(A)\bot s_3(A)$\\
$r_0(A)\bot s_1(A)$ $\iff$ $r_1(A)\bot s_2(A)$ $\iff$ $r_2(A)\bot s_3(A)$ $\iff$ $r_3(A)\bot s_0(A)$\\
$r_0(A)\bot s_2(A)$ $\iff$ $r_1(A)\bot s_3(A)$ $\iff$ $r_2(A)\bot s_0(A)$ $\iff$ $r_3(A)\bot s_1(A)$\\
$r_0(A)\bot s_3(A)$ $\iff$ $r_1(A)\bot s_0(A)$ $\iff$ $r_2(A)\bot s_1(A)$ $\iff$ $r_3(A)\bot s_2(A)$\\
$r_0(A)\bot r_1(A)$ $\iff$ $r_1(A)\bot r_2(A)$ $\iff$ $r_2(A)\bot r_3(A)$\\
$\iff$ $r_3(A)\bot r_0(A)$ $\iff$ $s_0(A)\bot s_1(A)$ $\iff$\\
$s_1(A)\bot s_2(A)$ $\iff$ $s_2(A)\bot s_3(A)$ $\iff$ $s_3(A)\bot s_0(A)$\\
$r_0(A)\bot r_2(A)$ $\iff$ $r_1(A)\bot r_3(A)$ $\iff$ $s_0(A)\bot s_2(A)$ $\iff$ $s_1(A)\bot s_3(A)$\\
\end{center}

Thus for finding mutually orthogonal Latin squares in $D_8(A)$, we should look at the orthogonality of $A=r_0(A)$ and $\sigma(A)$ for $\sigma \in D_8$.

\begin{lem}\label{lem1}
For any $A \in L_n$, $A$ is not orthogonal to $r_2(A)$.
\end{lem}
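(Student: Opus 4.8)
The plan is to argue directly from the definitions, using the fact that orthogonality of $A$ and $B$ means the map sending a cell $(i,j)$ to the pair $(a_{ij}, b_{ij})$ is a bijection onto $\{1,\dots,n\}^2$. Write $A=(a_{ij})$ and, by Definition~\ref{def:sigma}, $r_2(A)=(a_{(n+1-i)(n+1-j)})$; so the superimposed array at cell $(i,j)$ carries the pair $\bigl(a_{ij},\, a_{(n+1-i)(n+1-j)}\bigr)$. The key observation is that the cell $(i,j)$ and the cell $(n+1-i, n+1-j)$ produce pairs that are \emph{reverses} of each other: at $(i,j)$ we get $(a_{ij}, a_{(n+1-i)(n+1-j)})$, and at $(n+1-i,n+1-j)$ we get $(a_{(n+1-i)(n+1-j)}, a_{ij})$. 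The involution $(i,j)\mapsto(n+1-i,n+1-j)$ on the set of cells therefore corresponds, under the pairing, to the coordinate-swap involution on $\{1,\dots,n\}^2$.

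From here the argument splits on the parity of $n$. First I would handle the case where some cell is fixed by the central involution, i.e.\ $n$ odd and the center cell $(\frac{n+1}{2},\frac{n+1}{2})$: that cell produces a pair of the form $(c,c)$ with $c=a_{(n+1)/2,(n+1)/2}$, which is a diagonal pair. For $A$ to be orthogonal to $r_2(A)$ every one of the $n$ diagonal pairs $(1,1),\dots,(n,n)$ must be attained; but a diagonal pair $(c,c)$ can only come from a cell fixed by the involution (since a non-fixed cell and its partner contribute a pair and its genuine reverse, which are distinct and hence off-diagonal), and there is only one fixed cell. So at most one diagonal pair is attained, and since $n\ge 2$ in any Latin square worth considering (and certainly $n\ge 2$ here), orthogonality fails. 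For $n$ even there is no fixed cell, so \emph{no} diagonal pair $(c,c)$ is ever attained — the cells come in genuine reverse-pairs — and again orthogonality fails since $(1,1)$ is missed. I should also dispose of the trivial $n=1$ case separately (or note the claim is vacuous/false there as appropriate), but for $n\ge 2$ the argument is uniform in spirit.

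Let me restate the core of it cleanly, since that is really all the proof needs: suppose for contradiction $A\bot r_2(A)$. Consider the $n$ cells contributing the pairs $(k,k)$ for $k=1,\dots,n$. If a cell $(i,j)$ contributes $(k,k)$ then $a_{ij}=k=a_{(n+1-i)(n+1-j)}$, so the partner cell $(n+1-i,n+1-j)$ contributes the pair $(a_{(n+1-i)(n+1-j)}, a_{ij})=(k,k)$ as well. By orthogonality each pair is attained exactly once, so the cell must be its own partner: $i=n+1-i$ and $j=n+1-j$. This forces $n$ odd and $(i,j)=\bigl(\tfrac{n+1}{2},\tfrac{n+1}{2}\bigr)$, the unique self-partnered cell. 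Hence at most one of the $n$ pairs $(1,1),\dots,(n,n)$ is attained, contradicting orthogonality as soon as $n\ge 2$.

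The main obstacle — really the only thing to be careful about — is the bookkeeping around the fixed point of the central symmetry: one must not overlook that for odd $n$ a single diagonal pair \emph{is} legitimately attained, so the contradiction has to be phrased as ``at most one diagonal pair appears, but $n$ are required,'' rather than ``no diagonal pair appears.'' Once that is handled, everything else is immediate from the index formula for $r_2$ in Definition~\ref{def:sigma} and the definition of orthogonality; no computation with the specific squares $L$, $N$, $A_3$, or $B_3$ is needed, and the result holds for every Latin square of every order $n\ge 2$.
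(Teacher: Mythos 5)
Your argument is correct and is essentially the paper's own proof: both suppose orthogonality, note that a cell $(i,j)$ carrying a diagonal pair $(k,k)$ forces its centrally symmetric partner $(n+1-i,n+1-j)$ to carry the same pair, and conclude via orthogonality that such a cell must be the unique fixed center cell, so at most one of the $n$ required pairs $(k,k)$ can occur. Your unified ``at most one diagonal pair'' phrasing (and the explicit aside about the trivial $n=1$ case, where the statement actually fails) is a slightly cleaner packaging of the same even/odd case analysis the paper carries out.
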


\begin{proof}
Let $A=(a_{ij})$ and $r_2(A)=(b_{ij})$. Suppose that $A$ and $r_2(A)$ are orthogonal. Then we have

$$\left\{ (a_{ij},b_{ij})\, |\, i,j = 1, 2, \cdots, n \right\} = \left\{ (x,y)\, |\, x,y = 1, 2, \cdots, n \right\}.$$

Therefore there exist some integers $s_k,t_k$ such that $(a_{s_kt_k},b_{s_kt_k})=(k,k)$ for each nonnegative integer $k =1,2,\ldots n$. Let $n+1-s_k=s_k'$ and $n+1-t_k=t_k'$. Since $b_{s_kt_k} = a_{s_k't_k'}$ and $b_{s_k't_k'} = a_{s_kt_k}$, so $(a_{s_kt_k}, b_{s_kt_k}) = (b_{s_kt_k}, a_{s_kt_k}) = (a_{s_k't_k'}, b_{s_k't_k'})$. It means that the two ordered pairs $(a_{s_kt_k}, b_{s_kt_k})$ and $(a_{s_k't_k'}, b_{s_k't_k'})$ are the same in the set $\{ (a_{ij},b_{ij}) \}$. Since $A$ and $r_2(A)$ are orthogonal, we have $(s_k,t_k)=(s_k', t_k')$. That is, $s_k=s_k'$ and $t_k=t_k'$. This implies that $n=2s_k-1=2t_k-1$, that is, $s_k=t_k$ for any $k$. It contradicts.
\end{proof}

\begin{lem}\label{lem2}
Let $A \in L_n$ and $n$ be even. Then $A$ is not orthogonal to either $s_0(A)$ or $s_2(A)$.
\end{lem}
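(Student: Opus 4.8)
The plan is to mimic the parity argument used for $r_2$ in Lemma~\ref{lem1}, exploiting the fact that $s_0$ and $s_2$ are reflections across the horizontal and vertical axes of symmetry of the square, which have no fixed cells when $n$ is even. First I would treat $s_2$ in detail and then note that $s_0$ follows by an identical argument (or by applying Lemma~\ref{lem:sigma_orth} with a suitable $\sigma$, since $s_0 = r_1 \circ s_2 \circ r_1^{-1}$ up to the group relations, so $A \bot s_2(A)$ is equivalent to $r_1(A) \bot s_0(r_1(A))$, and running the same argument on the Latin square $r_1(A)$ handles $s_0$).

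For $s_2$: write $A = (a_{ij})$ and $s_2(A) = (b_{ij})$, so that $b_{ij} = a_{i(n+1-j)}$. Suppose for contradiction that $A$ and $s_2(A)$ are orthogonal. Then among the $n^2$ ordered pairs $(a_{ij}, b_{ij})$, each diagonal pair $(k,k)$ for $k = 1, \dots, n$ occurs exactly once; say $(a_{st}, b_{st}) = (k,k)$, so $a_{st} = a_{s(n+1-t)} = k$. Put $t' = n+1-t$. Since $n$ is even, $t \ne t'$, so $a_{st}$ and $a_{st'}$ are two entries in the same row $s$ of $A$ carrying the same value $k$ — contradicting the Latin square property of $A$. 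Hence no diagonal pair $(k,k)$ can occur at all, so $A \not\bot s_2(A)$.

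The main point to get right — though it is not really an obstacle — is the bookkeeping that forces $t \ne t'$: the equation $t = n+1-t$ has a solution in $\{1,\dots,n\}$ only when $n$ is odd, which is exactly why the hypothesis "$n$ even" is needed and why the conclusion genuinely fails to follow for odd $n$ by this method (indeed Choi's order-$9$ example shows $L \bot s_2(L)$ can hold for odd $n$). I would then remark that the $s_0$ case is symmetric: with $s_0(A) = (c_{ij})$, $c_{ij} = a_{(n+1-i)j}$, a diagonal pair $(k,k)$ at position $(s,t)$ forces $a_{st} = a_{(n+1-s)t} = k$ with $s \ne n+1-s$ since $n$ is even, again two equal entries in column $t$ of $A$, a contradiction. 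This completes the proof.
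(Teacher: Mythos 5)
Your proof is correct and follows essentially the same argument as the paper: orthogonality would force some pair $(k,k)$ to occur, i.e.\ a fixed cell of the reflection, which for even $n$ would require two equal entries in a single row (for $s_2$) or column (for $s_0$) of $A$, contradicting the Latin square property. The only difference is cosmetic — you treat $s_2$ first and the paper treats $s_0$ first, each handling the other case symmetrically.
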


\begin{proof}
Suppose that $A$ is orthogonal to $s_0(A)$. Let $s_0(A)=(a_{(n+1-i)j})=b_{ij}$. By the similar argument of proof of Lemma~\ref{lem1}, there exist integer $u$ and $v$ such that $a_{uv}=b_{uv}=k$ for some $k$. So $a_{uv} =b_{uv} = a_{(n+1-u)v}$. Since $A$ is a Latin square, the entries in the $v$-th column are all distinct. Thus $a_{uv} = a_{(n+1-u)v}$ implies $u = n+1-u$ and so $u= (n+1)/2$. However, $n$ is even so that $u$ is not an integer. It contradicts. Hence $A$ is not orthogonal to $s_0(A)$. We can show that $A$ is not orthogonal to $s_2(A)$ in a similar manner.
\end{proof}

\begin{thm}
Let $A \in L_n$ and $n$ be odd. Then the maximum number of mutually orthogonal Latin squares of order $n$ in the set $D_8(A)$ is less than or equal to 4.

And if we assume that $n$ is even, then the maximum number of mutually orthogonal Latin squares in the set $D_8(A)$ is 2.
\end{thm}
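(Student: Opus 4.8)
First I would settle the upper bounds. For the even case, Lemma~\ref{lem1} and Lemma~\ref{lem2} already remove $r_2(A)$, $s_0(A)$, $s_2(A)$ from consideration as partners of $A=r_0(A)$. Then I would invoke the equivalence table following Lemma~\ref{lem:sigma_orth} to transport these non-orthogonality facts around the whole set $D_8(A)$: for instance $r_0(A)\not\bot r_2(A)$ forces $r_1(A)\not\bot r_3(A)$, $s_0(A)\not\bot s_2(A)$, $s_1(A)\not\bot s_3(A)$, and similarly the $s_0,s_2$ facts propagate. The goal is to show that the ``orthogonality graph'' on the eight vertices $\{r_0,r_1,r_2,r_3,s_0,s_1,s_2,s_3\}(A)$ has no triangle once these edges are deleted, which would cap the clique number at $2$. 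I expect that after deleting the forced non-edges, every remaining potential edge is of the form $r_i(A)\bot s_j(A)$ (a rotation paired with a reflection), and any two reflections or any two rotations are non-orthogonal; since a triangle needs at least two vertices of the same type, no triangle survives. That gives the bound $2$ for even $n$. For this bound to be attained I would exhibit one explicit even-order example: take $A$ to be (a suitable relabeling of) a known pair of orthogonal Latin squares, e.g. order $5$ is odd so instead order $n=4$, where one checks $A\bot s_1(A)$ directly; alternatively cite that Choi's construction $A\otimes_S A$ gives orthogonal pairs and argue the relevant $D_8$ relation survives.

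For the odd case the upper bound $4$ is the more delicate half. Lemma~\ref{lem1} still kills $r_2(A)$, hence (via the table) also $r_1(A)\bot r_3(A)$, $s_0(A)\bot s_2(A)$, $s_1(A)\bot s_3(A)$. I would then argue that among the eight elements, a mutually orthogonal family can contain at most one of $\{r_0,r_2\}$, at most one of $\{r_1,r_3\}$, at most one of $\{s_0,s_2\}$, at most one of $\{s_1,s_3\}$ — that is, the orthogonality graph is a subgraph of the complete $4$-partite graph with these four parts, so its clique number is at most $4$. The pairs $\{r_0,r_2\}$ etc.\ being non-adjacent is exactly what Lemma~\ref{lem1} and its consequences give; one must check that $r_0\not\bot r_2$ also yields $s_0\not\bot s_2$, which is listed in the last displayed equivalence block. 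This is the crux of the argument and where I'd be most careful: I need the four ``diameter'' pairs of the octagon to all be non-orthogonal, and that all four follow from Lemma~\ref{lem1} through the equivalences, with no further obstruction forcing the bound below $4$.

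To show $4$ is achieved for odd $n$, I would return to Choi's square itself as the model case $n=3$ (order $9$), or more generally any $A$ for which $A\bot s_2(A)$: then by the table $A=r_0(A)$, $s_2(A)$, together with — I would check — $r_1(A)$ and $r_3(A)$ (or $s_0(A)$, $s_1(A)$, chosen so that the four lie in four distinct parts and are pairwise orthogonal) form a mutually orthogonal quadruple. Concretely, for Choi's $L$ one has $L\bot s_2(L)=N$; I would verify that $\{L, s_2(L), s_1(L), s_3(L)\}$ or the appropriate four-element subset is mutually orthogonal by direct superimposition, using Lemma~\ref{lem:sigma_orth} to reduce the number of checks. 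The main obstacle is purely bookkeeping: making sure the chosen four elements really are pairwise orthogonal (not just pairwise ``not obviously non-orthogonal''), since the equivalences only transport orthogonality, they do not create it — so at least one genuine orthogonality check on an explicit square is unavoidable, and I would isolate that as a lemma or an explicit example computation.
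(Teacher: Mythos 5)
Your upper-bound argument for odd $n$ (Lemma~\ref{lem1} plus the equivalence table makes the four ``diameter'' pairs $\{r_0,r_2\}$, $\{r_1,r_3\}$, $\{s_0,s_2\}$, $\{s_1,s_3\}$ non-orthogonal, so a mutually orthogonal family meets each part at most once) is correct and is essentially the paper's pigeonhole argument in different clothing. But two steps of your plan break down. First, in the even case your key claim that ``any two reflections or any two rotations are non-orthogonal'' is false: Lemmas~\ref{lem1} and~\ref{lem2} only kill the diameter pairs and the $r_i$--$s_i$, $r_i$--$s_{i+2}$ pairs, while adjacent rotations (and adjacent reflections) can perfectly well be orthogonal when $n$ is even --- the paper's own order-$4$ square $A$ after its Corollary on Choi's squares satisfies $A\bot r_1(A)$. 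So the surviving graph is not bipartite-looking in the way you expect, and the triangle-freeness does not follow from ``two vertices of the same type.'' The correct patch is the paper's: if a triple exists, it must contain exactly one adjacent pair from one class, WLOG $(r_0(A),r_1(A))$; the third square must be a reflection, Lemma~\ref{lem2} (propagated by the table) rules out $s_0(A),s_2(A)$ as partners of $r_0(A)$, and the rows $r_1\bot s_1\iff r_0\bot s_0$ and $r_1\bot s_3\iff r_0\bot s_2$ rule out $s_1(A),s_3(A)$ as partners of $r_1(A)$, giving the contradiction.

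Second, your plan to attain the bound $4$ for odd $n$ via Choi's square cannot work: the paper's subsequent corollary shows the maximum number of mutually orthogonal squares in $D_8(L)$ is exactly two, and your proposed witness $\{L,s_2(L),s_1(L),s_3(L)\}$ already contains the pair $s_1(L),s_3(L)$, which lies in a single part and is non-orthogonal by Lemma~\ref{lem1} and the table (moreover $L\not\bot s_1(L)$ since the diagonal reflections fix the constant diagonal of $5$'s). Note that the paper itself proves only the upper bounds $4$ and $2$ and never exhibits a square achieving $4$ (nor an even-order square achieving $2$, though its order-$4$ example supplies one); so if you want ``maximum'' to mean an attained value rather than a bound, you need a genuinely new witness for the odd case, not Choi's $L$.
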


\begin{proof} Let $M$ be the set of mutually orthogonal Latin squares, which has the maximum number of mutually orthogonal Latin squares in the set $D_8(A)$. By Lemma~\ref{lem1}, we can get $r_0(A)\notperp r_2(A)$, $r_1(A)\notperp r_3(A)$, $s_0(A)\notperp s_2(A)$ and $s_1(A)\notperp s_3(A)$. If we take three or more elements of $M$ from the set $\left\{ r_0(A), r_1(A), r_2(A), r_3(A) \right\}$, then there should appear a pair of non-orthogonal Latin squares. Similarly, we cannot take three or more elements from the set $\left\{ s_0(A), s_1(A), s_2(A), s_3(A) \right\}$. It means that the set $M$ can be $M = \{ r_{i_1}(A), r_{i_2}(A), s_{j_1}(A), s_{j_2}(A)\}$. Therefore we have that the maximum number of mutually orthogonal Latin squares in the set $D_8(A)$ is less than or equal to four.

Suppose $n$ is even and $M = \{ r_{i_1}(A), r_{i_2}(A), s_{j_1}(A), s_{j_2}(A)\}$. It is possible that $M$ does not contain $r_0(A)$, however, we can get the set of 4 mutually orthogonal Latin squares containing $r_0(A)$ by the group action. So without loss of generality, assume that $r_{i_1} = r_0$. By Lemma~\ref{lem2}, $s_{j_1}, s_{j_2}$ should be $1$ and $3$. However  $s_1(A)\notperp s_3(A)$ by Lemma~\ref{lem1}, so $| \, M \,| \ne 4$. Now suppose that $M = \{ r_0(A), r_{i_2}(A), s_{j_1}(A) \}$. Note that $i_2 = 1,3$ and $j_1 = 1,3$. However, Lemma~\ref{lem2} also implies that $r_1(A)\notperp s_1(A)$, $r_1(A)\notperp s_3(A)$, $r_3(A)\notperp s_1(A)$ and $r_3(A)\notperp s_3(A)$. Thus $| \, M \,| \ne 3$. Therefore $| \, M \,| = 2$ if $n$ is even.
\end{proof}

\begin{cor}
Let $L$ be one of Choi's Latin squares of order 9. Then the maximum number of mutually orthogonal Latin squares in $D_8(L)$ is two.
\end{cor}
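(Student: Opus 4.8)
The plan is to deduce this immediately from the theorem that precedes it, by verifying that Choi's Latin square $L$ of order $9$ realizes the "odd case" bound of $4$ rather than some smaller number, and then specializing the general statement. Actually, on closer inspection the corollary only asserts the maximum is \emph{two}, which is smaller than the generic odd bound of $4$; so the real content is to show that for this particular $L$ the set $D_8(L)$ does \emph{not} contain four mutually orthogonal squares, only two. Thus the corollary is not a pure specialization of the theorem — it is a sharpening for the specific square $L$, and the proof must actually compute orthogonality relations within $D_8(L)$.

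First I would recall from the list of equivalences following Lemma~\ref{lem:sigma_orth} that, since $n=9$ is odd, the only possibly-orthogonal pairs among $\{r_0(L),\dots,s_3(L)\}$ reduce (up to the displayed $\iff$'s) to checking the orthogonality of $L=r_0(L)$ against each of $r_1(L), r_3(L), s_0(L), s_1(L), s_2(L), s_3(L)$ — we already know $L\not\perp r_2(L)$ by Lemma~\ref{lem1}. We are given $L\perp s_2(L)$ since $N=s_2(L)$ and $L\perp N$. The key computational step is then to check the remaining candidates: one verifies by direct inspection of the $9\times 9$ arrays (or using the $A_3\otimes_S A_3$ description) that $L$ is \emph{not} orthogonal to $r_1(L)$, not orthogonal to $s_0(L)$, $s_1(L)$, or $s_3(L)$, while $L\perp s_2(L)$ (and correspondingly, via the equivalences, $L\perp r_1(L)$ would be forced if any of the "rotation pair" relations held, etc.). After this, the only orthogonal relations surviving in $D_8(L)$ are the ones equivalent to $r_0(L)\perp s_2(L)$, namely $\{r_0\perp s_2, r_1\perp s_3, r_2\perp s_0, r_3\perp s_1\}$, which is a perfect matching on the $8$-element set with no two edges sharing a vertex and no triangle; hence no three of the eight squares are mutually orthogonal, so the maximum is exactly $2$, and it is attained by $\{L, s_2(L)\}=\{L,N\}$.

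The cleanest way to organize the computation is to use the Kronecker description $L = A_3 \otimes_S A_3$. Since the dihedral actions commute with $\otimes_S$ in a suitable sense — applying $\sigma\in D_8$ to $A_3\otimes_S A_3$ yields $\sigma(A_3)\otimes_S \sigma(A_3)$ when $\sigma$ is a rotation or one of the axis reflections aligned with the block structure — one can reduce each orthogonality question for the order-$9$ squares to an orthogonality question about order-$3$ squares $\sigma(A_3)$ together with a compatibility condition, via Theorem~\ref{thm:knonecker}. In particular $A_3 \perp \sigma(A_3)$ in $L_3$ can be checked by hand for all eight $\sigma$, and one finds that the only $\sigma\ne r_0$ with $A_3\perp\sigma(A_3)$ are exactly those forcing $L\perp\sigma(L)$ to be the $s_2$-type relation; every other putative orthogonality fails already at order $3$ (or fails the diagonal-pairing obstruction of Lemmas~\ref{lem1} and \ref{lem2}-type arguments adapted to odd $n$).

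The main obstacle is the finite but slightly tedious case analysis: there are effectively six reflections/rotations other than $r_0$ and $r_2$ to test against $L$, and for each one must either exhibit a repeated ordered pair in the superimposition (a short certificate) or confirm orthogonality. I expect the bookkeeping — keeping straight which of the many $\iff$-equivalent relations one is actually verifying, and making sure the $\otimes_S$-vs-$D_8$ commutation is applied only for the reflections that respect the $3\times 3$ block decomposition — to be the fiddly part, but none of it is deep: once the table of which $A_3\perp\sigma(A_3)$ hold is in hand, the conclusion that $D_8(L)$ contains at most (and exactly) two mutually orthogonal squares is immediate from the structure of the resulting orthogonality graph on $D_8(L)$.
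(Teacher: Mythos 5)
Your overall strategy (restrict attention to pairs $L$ versus $\sigma(L)$ via Lemma~\ref{lem:sigma_orth}, then analyze the resulting orthogonality graph on $D_8(L)$) is the same skeleton as the paper's, but the computational facts you feed into it are wrong, and this breaks the argument. You claim that $L$ is not orthogonal to $r_1(L)$ and not orthogonal to $s_0(L)$, so that the only surviving relations are the four equivalent to $r_0(L)\perp s_2(L)$, forming a perfect matching with no triangles. In fact $L$ \emph{is} orthogonal to $r_1(L)$, $r_3(L)$, $s_0(L)$ and $s_2(L)$ (the paper verifies this by enumerating ordered pairs). Your own Kronecker reduction already shows this: with $L=A_3\otimes_S A_3$ one computes $r_1(A_3)=s_2(A_3)=B_3$, and $A_3\perp B_3$, while a one-line check of the nine superimposed pairs shows $A_3\perp s_0(A_3)$; since the dihedral action is compatible with the block structure, these lift to $L\perp r_1(L)$ and $L\perp s_0(L)$. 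So the orthogonality graph is not a matching, and your "no two edges share a vertex, hence no triangle" conclusion is unavailable.

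Because $L$ has four orthogonal partners in $D_8(L)$, the real content of the corollary is to rule out triples such as $\{L, r_1(L), s_0(L)\}$, $\{L, r_1(L), s_2(L)\}$, $\{L, r_3(L), s_0(L)\}$ and $\{L, r_3(L), s_2(L)\}$ (the rotation--rotation and reflection--reflection triples are already excluded by Lemmas~\ref{lem:sigma_orth} and~\ref{lem1}, and $s_0(L)\perp s_2(L)$ fails since it is equivalent to $L\perp r_2(L)$). This requires checking that $r_1(L)$ and $r_3(L)$ are each non-orthogonal to $s_0(L)$ and $s_2(L)$ --- e.g.\ the ordered pair $(4,6)$ repeats when $r_1(L)$ is superimposed on $s_0(L)$, and $(4,4)$ repeats against $s_2(L)$ --- and this step, which is the crux of the paper's proof, is entirely absent from your proposal. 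Note also that these cross checks genuinely live at order $9$: they are not equivalences covered by the displayed $\iff$ list reducing everything to $r_0(L)$ versus $\sigma(L)$, so they cannot be skipped by the order-$3$ reduction alone.
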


\begin{proof}
By Theorem 3.5 there are at most 4 mutually orthogonal Latin squares in $D_8(L)$. Without loss of generality, we may assume that $L$ is one of them. We first show that there are only two mutually orthogonal Latin squares among $L, r_1(L), r_2(L), r_3(L)$. By Lemma 3.3, $L$ is not orthogonal to $r_2(L)$. This also implies that $r_1(L)$ is not orthogonal to $r_3(L)$. On the other hand, we have checked by enumerating all ordered pairs that $L$ is orthogonal to both $r_1(L)$ and $r_3(L)$. Therefore we have only two cases $\{L, r_1(L) \}$ and $\{L, r_3(L) \}$ among rotations.

We can easily check that $L$ is orthogonal to both $s_0(L)$ and $s_2(L)$ while $L$ is neither orthogonal to $s_1(L)$ nor to $s_3(L)$ because the two diagonal reflections do not change the value of 5 in the main diagonal. However $s_0(L)$ cannot be orthogonal to $s_2(L)$ because they reduce to $L$ and $r_2(L)$ which are not orthogonal by Lemma 3.3.

Therefore we have the following four possibilities.

\begin{enumerate}
\item $\{L, r_1(L), s_0(L) \}$

\item $\{L, r_1(L), s_2(L) \}$

\item $\{L, r_3(L), s_0(L) \}$

\item $\{L, r_3(L), s_2(L) \}$
\end{enumerate}

\[
r_1(L)=\begin{array}{c}
1 3 2 7 9 8 4 6 5 \\
3 2 1 9 8 7 6 5 4\\
2 1 3 8 7 9 5 4 6\\
7 9 8 {\bf 4} 6 5 1 3 2\\
9 8 7 6 5 {\bf 4} 3 2 1\\
8 7 9 5 4 6 2 1 3\\
4 6 5 1 3 2 7 9 8\\
6 5 4 3 2 1 9 8 7\\
5 4 6 2 1 3 8 7 9\\
\end{array}
~~~
s_0(L)=\begin{array}{c}
9 7 8 3 1 2 6 4 5\\
7 8 9 1 2 3 4 5 6\\
8 9 7 2 3 1 5 6 4\\
3 1 2 {\bf 6} 4 5 9 7 8\\
1 2 3 4 5 {\bf 6} 7 8 9\\
2 3 1 5 6 4 8 9 7\\
6 4 5 9 7 8 3 1 2\\
4 5 6 7 8 9 1 2 3\\
5 6 4 8 9 7 2 3 1\\
\end{array}
~~~
s_2(L)=\begin{array}{c}
1 3 2 7 9 8 4 6 5\\
3 2 1 9 8 7 6 5 4 \\
2 1 3 8 7 9 5 4 6\\
7 9 8 {\bf 4} 6 5 1 3 2\\
9 8 7 6 5 {\bf 4} 3 2 1\\
8 7 9 5 4 6 2 1 3\\
4 6 5 1 3 2 7 9 8\\
6 5 4 3 2 1 9 8 7\\
5 4 6 2 1 3 8 7 9\\
\end{array}
~~~
r_3(L)=\begin{array}{c}
9 7 8 3 1 2 6 4 5\\
7 8 9 1 2 3 4 5 6\\
8 9 7 2 3 1 5 6 4\\
3 1 2 {\bf 6} 4 5 9 7 8\\
1 2 3 4 5 {\bf 6} 7 8 9\\
2 3 1 5 6 4 8 9 7\\
6 4 5 9 7 8 3 1 2\\
4 5 6 7 8 9 1 2 3\\
5 6 4 8 9 7 2 3 1\\
\end{array}
\]

We have checked that $r_1(L)$ is not orthogonal to $s_0(L)$ because $(4,6)$ is repeated and $r_1(L)$ is not orthogonal to $s_2(L)$ because $(4,4)$ is repeated. Similarly, $r_3(L)$ is not orthogonal to $s_0(L)$ because $(6,6)$ is repeated and $r_3(L)$ is not orthogonal to $s_2(L)$ because $(6,4)$ is repeated. These are visualized by pairing the bold face numbers in $r_1(L), s_0(L), s_2(L), r_3(L)$.

Therefore, we have $\{L, r_1(L)\}$, $\{L, s_0(L) \}$, $\{L, s_2(L)\}$, or $\{L, r_3(L)\}$ as a maximal mutually orthogonal Latin square subset of $D_8(L)$. Hence the maximum number of mutually orthogonal Latin squares in $D_8(L)$ is two.
\end{proof}

If a Latin square $A$ of order $n$ is orthogonal to $\sigma(A)$ for some $\sigma \in D_8(A)$, we call such $A$ {\em a dihedral Latin square}. We recall that a Latin square $A$ is {\em self-orthogonal} if it is orthogonal to its transpose~\cite{a5}. Since the transpose of $A$ can be represented as $s_1(A)$ ($s_1$ is a main diagonal reflection), the concept of a dihedral Latin square includes the concept of a self-orthogonal Latin square. For example, Choi's two Latin squares $L, N$ of order 9 are dihedral since $N=s_2(L)$.

Let us take another example as follows.

$$A =
\begin{matrix}
1 & 2 & 3 & 4\\
3 & 4 & 1 & 2\\
4 & 3 & 2 & 1\\
2 & 1 & 4 & 3\\
\end{matrix}
~~~
r_1(A) =
\begin{matrix}
4 & 2 & 1 & 3\\
3 & 1 & 2 & 4\\
2 & 4 & 3 & 1\\
1 & 3 & 4 & 2\\
\end{matrix}
~~~
s_1(A) =
\begin{matrix}
1 & 3 & 4 & 2\\
2 & 4 & 3 & 1\\
3 & 1 & 2 & 4\\
4 & 2 & 1 & 3\\
\end{matrix}
$$

Then $A$ and $r_1(A)$ are a pair of orthogonal Latin squares. So $A$ is a dihedral Latin square. Similarly, $A$ and $s_1(A)$ are orthogonal. So $A$ is self-orthogonal too. However $r_1(A)$ is not orthogonal to $s_1(A)$ since $(1,4)$ is repeated. By the previous theorem, the maximum number of mutually orthogonal Latin squares in the set $D_8(A)$ is 2.

Consider Choi's type Latin squares $A \oplus_S A$, $r_1(A) \oplus_S r_1(A)$, and $s_1(A) \oplus_S s_1(A)$. Then  $A \oplus_S A$ is orthogonal to both $r_1(A) \oplus_S r_1(A)$ and $s_1(A) \oplus_S s_1(A)$.


\section{Magic squares from Latin squares}
	
\begin{defi}{\em
A \textit{magic square} of order $n$ is an $n\times n$ array (or matrix) of the $n^2$ consecutive integers with the sums of each row, each column, each main diagonal, and each antidiagonal are the same.
}
\end{defi}

For example,
$$\begin{matrix}
4 & 9 & 2\\
3 & 5 & 7\\
8 & 1 & 6\\
\end{matrix}$$
is a magic square of order 3 since the sums of each row, column, main diagonal and antidiagonal are the same. Similarly, for order $n$ Latin square, we assume the symbols are $\{1,2,\cdots,n^2\}$.
	
Then the question is what the relation between Latin squares and magic squares is. We need the following definition.

\begin{defi}{\em
Let $A$ be a Latin square of order $n$. Then, $A$ is called a \textit{double-diagonal} Latin square~\cite{Hilton},~\cite{a4} if the $n$ entries in main diagonal are all distinct and the $n$ entries in antidiagonal are also all distinct.
}
\end{defi}
A construction of orthogonal double-diagonal Latin squares has been actively studied ~\cite{a3},~\cite{Brown},~\cite{a5}.

\begin{thm}{\rm (\cite{a1})}\label{thm:diag_lat}
 Suppose a pair of orthogonal double-diagonal Latin squares of order $n$ exist. Then a magic square of order $n$ can be constructed from them.
\end{thm}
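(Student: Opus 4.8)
The plan is to take a pair of orthogonal diagonal Latin squares $A=(a_{ij})$ and $B=(b_{ij})$ of order $n$ and form the candidate magic square $M=(m_{ij})$ by the classical superimposition formula $m_{ij} = n(a_{ij}-1) + b_{ij}$. First I would check that $M$ really does contain each of the integers $1,2,\ldots,n^2$ exactly once: since $A$ and $B$ are orthogonal, the map $(i,j)\mapsto(a_{ij},b_{ij})$ is a bijection onto $\{1,\ldots,n\}^2$, and $(a,b)\mapsto n(a-1)+b$ is a bijection from $\{1,\ldots,n\}^2$ onto $\{1,\ldots,n^2\}$, so the composite is a bijection; this uses only the orthogonality and none of the diagonal hypotheses.

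Next I would verify the line sums. Fix a row $i$. Because $A$ is a Latin square, $\{a_{i1},\ldots,a_{in\}}=\{1,\ldots,n\}$, so $\sum_j n(a_{ij}-1) = n\sum_{k=1}^n (k-1) = n\binom{n}{2}$; similarly $\sum_j b_{ij} = \binom{n+1}{2}$. Hence every row sums to $n\binom{n}{2}+\binom{n+1}{2}$, a constant independent of $i$. The identical argument with columns, using that each column of $A$ and of $B$ is a permutation of $\{1,\ldots,n\}$, gives the same constant for every column. So far only the Latin property has been used.

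The place where the diagonal hypothesis enters is the two diagonals, and this is the only slightly delicate step. For the main diagonal, the entries $a_{11},a_{22},\ldots,a_{nn}$ of $A$ are all distinct by the definition of a diagonal Latin square, hence they again form the full set $\{1,\ldots,n\}$; likewise $b_{11},\ldots,b_{nn}$ are all distinct. Therefore $\sum_i m_{ii} = n\sum_i(a_{ii}-1) + \sum_i b_{ii} = n\binom{n}{2}+\binom{n+1}{2}$, matching the row and column constant. The same computation applies to the antidiagonal entries $a_{i,n+1-i}$ and $b_{i,n+1-i}$, which are all distinct precisely because $A$ and $B$ are diagonal Latin squares. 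Collecting the four computations, all $2n+2$ line sums equal $n\binom{n}{2}+\binom{n+1}{2}$, so $M$ is a magic square of order $n$.

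The main obstacle is conceptual rather than computational: one must notice that a plain pair of orthogonal Latin squares only guarantees the row and column sums, and that the definition of a \emph{diagonal} Latin square is exactly what is needed to promote the diagonal entries to a permutation of $\{1,\ldots,n\}$ so that the diagonal sums also collapse to the same constant. Everything else is the routine arithmetic of summing an arithmetic progression, which I would not belabor.
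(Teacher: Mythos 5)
Your proposal is correct and follows essentially the same route as the paper: form $M$ with $m_{ij}=n(a_{ij}-1)+b_{ij}$, use orthogonality to see that $M$ contains $1,\ldots,n^2$ exactly once, use the Latin property for the row and column sums, and use the diagonal hypothesis to make the two diagonal sums equal the same constant $n(n^2+1)/2$. The only cosmetic difference is that you phrase the distinctness of the entries as a composition of two bijections, whereas the paper argues injectivity directly from $m_{st}=m_{uv}$ via divisibility by $n$; the content is the same.
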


\begin{defi}{\em
Suppose $A = (a_{ij})$ and $B = (b_{ij})$ are orthogonal Latin squares of order $n$. Then define an $n \times n$ square $A +_S B$ by
$$A +_S B = (n (a_{ij} - 1) + b_{ij}).$$
}
\end{defi}
	
This $A +_S B$ is not necessarily a magic square since its sums of two main diagonals is not the same as its sums of columns or rows. Theorem~\ref{thm:diag_lat} states that if the two Latin squares $A$ and $B$ are orthogonal and double-diagonal, then $A +_S  B$ is a magic square.

And the another noticeable point is that the pair of Choi's orthogonal Latin squares is not double-diagonal. However, Choi's squares also can produce a magic square even though they are not double-diagonal.

\begin{thm}\label{thm:main_anti}
If there is a pair of orthogonal Latin squares $A$ and $B$ of order $n$ such that the sum of main diagonal of each of $A$ and $B$ is $n(n+1)/2$ and the sum of antidiagonal of each of $A$ and $B$ is $n(n+1)/2$, then $A +_S B$ is a magic square of order $n$.
\end{thm}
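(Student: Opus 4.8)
The plan is to re-run the proof of Theorem~\ref{thm:diag_lat} almost verbatim, observing that the word ``diagonal'' was used there only to guarantee that the main diagonal (resp.\ antidiagonal) of $A$, and of $B$, consists of the full set $\{1,2,\cdots,n\}$ and hence sums to $n(n+1)/2$. The present hypothesis postulates precisely these four sum identities, so the same bookkeeping goes through with ``$\sum a_{ii}=n(n+1)/2$ because $A$ is diagonal'' replaced by ``$\sum a_{ii}=n(n+1)/2$ by hypothesis.''

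Concretely I would argue in three steps. \textbf{Step 1.} Set $M = A +_S B = (m_{ij})$ with $m_{ij} = n(a_{ij}-1)+b_{ij}$, and recall that the argument showing $M$ realizes each value $1,2,\cdots,n^2$ exactly once uses \emph{only} orthogonality: if $m_{st}=m_{uv}$ then $n \mid b_{uv}-b_{st}$, which forces $b_{st}=b_{uv}$ (since $1\le b_{ij}\le n$) and then $a_{st}=a_{uv}$, so $(a_{st},b_{st})=(a_{uv},b_{uv})$ and hence $(s,t)=(u,v)$; diagonality plays no role, so this step is unchanged. \textbf{Step 2.} The row and column sums
$$\sum_{i=1}^n m_{ij} = n\sum_{i=1}^n a_{ij} - n^2 + \sum_{i=1}^n b_{ij} = \frac{n(n^2+1)}{2}, \qquad \sum_{j=1}^n m_{ij} = \frac{n(n^2+1)}{2}$$
follow, exactly as before, from the fact that every row and column of a Latin square sums to $n(n+1)/2$; again no diagonality is needed. \textbf{Step 3} (the only place the hypothesis enters). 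Using the assumed diagonal sums of $A$ and $B$,
$$\sum_{i=1}^n m_{ii} = n\sum_{i=1}^n a_{ii} - n^2 + \sum_{i=1}^n b_{ii} = n\cdot\frac{n(n+1)}{2} - n^2 + \frac{n(n+1)}{2} = \frac{n(n^2+1)}{2},$$
and in exactly the same way $\sum_{i=1}^n m_{i(n+1-i)} = \frac{n(n^2+1)}{2}$. Combining the three steps, $M$ is an arrangement of $1,2,\cdots,n^2$ all of whose rows, columns, main diagonal and antidiagonal sum to $n(n^2+1)/2$, i.e.\ a magic square of order $n$.

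I do not anticipate a genuine obstacle here: the content of the theorem is just the realization that Theorem~\ref{thm:diag_lat} is a corollary of a statement about diagonal \emph{sums} rather than diagonal \emph{sets}, together with the remark that Choi's squares $L$ and $N$ are not diagonal yet do satisfy this weaker hypothesis. The only point needing care, when one subsequently applies the theorem to $L$ and $N$, is to check by inspection that their main-diagonal and antidiagonal sums both equal $9\cdot 10/2 = 45$ --- a finite verification on the two $9\times 9$ arrays exhibited in the introduction.
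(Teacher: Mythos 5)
Your proposal is correct and follows essentially the same route as the paper's own proof: distinctness of the entries of $A +_S B$ from orthogonality alone, row and column sums from the Latin square property, and the diagonal and antidiagonal sums from the assumed sum hypotheses. No meaningful difference from the paper's argument.
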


\begin{proof} Suppose $A=(a_{ij})$ and $B=(b_{ij}), (i,j\in \{1,2,\cdots,n\})$ are orthogonal Latin squares such that
$$\sum_{i=1}^{n} a_{ii} = \sum_{i=1}^{n} b_{ii} = \frac {n(n+1)}{2}$$ \\ and \\
$$\sum_{i=1}^{n} a_{i(n+1-i)} = \sum_{i=1}^{n} b_{i(n+1-i)} = \frac {n(n+1)}{2}.$$
Now define $M=(m_{ij})$ by $M=(m_{ij})=(n(a_{ij}-1)+b_{ij})$. We want to show that $M$ is a magic square. Since $1\leq a_{ij},\, b_{ij}\leq n$ for all $(i,j)$, $1\leq n(a_{ij}-1)+b_{ij}\leq n^2$. We first show that each $m_{ij}$ is distinct. Suppose $n(a_{uv}-1)+b_{uv}=n(a_{st}-1)+b_{st}$. Then $n(a_{uv}-a_{st})=b_{st}-b_{uv}$. So $n\mid (b_{st}-b_{uv})$. However, $1\leq b_{ij}\leq n$ for all $(i,j)$, so $1-n\leq b_{st}-b_{uv}\leq n-1$. Thus $n \mid (b_{st}-b_{uv})$ implies $b_{st}-b_{uv}=0$ and so $a_{uv} = a_{st}$. Since $A$ and $B$ are orthogonal Latin squares, $(u,v)=(s,t)$. Hence if $(u,v)\ne (s,t)$ then $n(a_{uv}-a_{st})\ne b_{st}-b_{uv}$ so all $m_{ij}$ are distinct.

Now calculate the sums.
$$\sum_{i=1}^{n} \{n(a_{ij}-1)+b_{ij}\} = n\sum_{i=1}^{n} a_{ij} -n^2 + \sum_{i=1}^{n} b_{ij} = \frac {n(n^2+1)}{2},$$

$$\sum_{j=1}^{n} \{n(a_{ij}-1)+b_{ij}\} = n\sum_{j=1}^{n} a_{ij} -n^2 + \sum_{j=1}^{n} b_{ij} = \frac {n(n^2+1)}{2},$$

$$\sum_{i=1}^{n} \{ n(a_{ii}-1)+b_{ii} \} = n\sum_{i=1}^{n} a_{ii} -n^2 + \sum_{i=1}^{n} b_{ii} = \frac {n(n^2+1)}{2},$$
and similarly,
$$\sum_{i=1}^{n} \{n(a_{i(n+1-i)}-1)+b_{i(n+1-i)} \}= \frac {n(n^2+1)}{2}.$$
Thus the sums are the same. Hence $M$ is a magic square.
\end{proof}

We have an existence theorem satisfying Theorem~\ref{thm:main_anti}.

\begin{thm}\label{thm:odd}
For any odd number $n \ge 3$, there exists a pair of orthogonal Latin squares each of whose sum of main diagonal (and antidiagonal respectively) is $n(n+1)/2$.
\end{thm}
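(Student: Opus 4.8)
The plan is to exhibit an explicit pair $A,B$ of orthogonal Latin squares built from linear functions modulo $n$, with the additive constants chosen so that the two required diagonal sums come out to $n(n+1)/2$. The guiding observation is that the familiar cyclic pair $a_{ij}\equiv i+j$, $b_{ij}\equiv i-j\pmod n$ is orthogonal when $n$ is odd, but its antidiagonal (respectively main diagonal) is \emph{constant}, and with the ``wrong'' constant, so those sums fail; however a diagonal that is constant with common value the central symbol $\tfrac{n+1}{2}$ has sum exactly $n\cdot\tfrac{n+1}{2}$. So I would translate the cyclic squares: for $i,j\in\{1,\dots,n\}$, let $A=(a_{ij})$ and $B=(b_{ij})$ where the entries are the representatives in $\{1,\dots,n\}$ of
\[
a_{ij}\equiv i+j+\tfrac{n-1}{2}\pmod n,\qquad b_{ij}\equiv i-j+\tfrac{n+1}{2}\pmod n,
\]
which makes sense because $n$ odd forces $\tfrac{n\pm1}{2}\in\mathbb{Z}$.

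Then I would verify four things in order. First, $A$ and $B$ are Latin squares: for fixed $i$ the set $\{\,i+j+\tfrac{n-1}{2}:j=1,\dots,n\,\}$ is a complete residue system mod $n$, and likewise down columns and for $B$. Second, $A\perp B$: if $(a_{ij},b_{ij})=(a_{st},b_{st})$ then $i+j\equiv s+t$ and $i-j\equiv s-t\pmod n$, so $2i\equiv 2s$, and since $\gcd(2,n)=1$ we get $i=s$ and hence $j=t$. Third, the diagonal sums of $A$: on the main diagonal $a_{ii}\equiv 2i+\tfrac{n-1}{2}$ runs over a complete residue system as $i=1,\dots,n$ (again $\gcd(2,n)=1$), so $\sum_i a_{ii}=1+2+\cdots+n=\tfrac{n(n+1)}{2}$; on the antidiagonal $a_{i,n+1-i}\equiv (n+1)+\tfrac{n-1}{2}\equiv\tfrac{n+1}{2}\pmod n$, a constant, so that sum is $n\cdot\tfrac{n+1}{2}$. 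Fourth, the diagonal sums of $B$: symmetrically, $b_{ii}\equiv\tfrac{n+1}{2}$ is constant on the main diagonal, while $b_{i,n+1-i}\equiv 2i-\tfrac{n+1}{2}$ runs over a complete residue system on the antidiagonal; both sums are again $\tfrac{n(n+1)}{2}$. This establishes the theorem; in fact, by Theorem~\ref{thm:main_anti}, $A+_S B$ is then a magic square of order $n$.

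The step that needs the right idea rather than routine bookkeeping is precisely the choice of the additive constants, and the reason it is needed is genuine: one cannot simply invoke a pair of orthogonal \emph{diagonal} Latin squares together with Theorem~\ref{thm:diag_lat}, because for $n=3$ there is no diagonal Latin square at all (a short counting argument forces the centre entry to be $2$ and then produces a repeated $2$ in the middle column). So the construction must tolerate a genuinely constant diagonal in each square; the device is to shift so that the constant becomes the central symbol $\tfrac{n+1}{2}$, and to have $A$ degenerate on its antidiagonal while $B$ degenerates on its main diagonal, so that each square separately satisfies the hypothesis of Theorem~\ref{thm:main_anti}. Oddness of $n$ is used in exactly three places: to make $\tfrac{n\pm1}{2}$ integers, to make $2$ invertible for orthogonality, and to make the ``moving'' diagonal of each square a full permutation of $\{1,\dots,n\}$.
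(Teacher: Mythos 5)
Your proposal is correct and is essentially the paper's own construction in algebraic dress: your $B$ is exactly $s_2(A)$ (indeed $a_{i,(n+1-j)}\equiv i-j+\tfrac{n+1}{2}\pmod n$), and the paper likewise takes a circulant square $A_n$ whose entries depend linearly on $i-j$, with the central symbol $k=\tfrac{n+1}{2}$ occupying the constant diagonal, paired with its vertical mirror $s_2(A_n)$; your ``$2$ is invertible mod odd $n$'' step is precisely the algebraic form of the paper's argument that the lines $x=y+\alpha$ and $x=-y+\beta$ meet in a unique point of $\mathbb{Z}_n$. The differences are cosmetic (you put the constant diagonal on the antidiagonal of $A$ rather than the main diagonal, and your explicit formulas $a_{ij}\equiv i+j+\tfrac{n-1}{2}$, $b_{ij}\equiv i-j+\tfrac{n+1}{2}$ make the Latin, orthogonality, and diagonal-sum checks somewhat more explicit than the paper's pandiagonal description).
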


\begin{proof}
Suppose $n=2k-1$ where $k \ge 2$. Let $A_n=(a_{ij})$ be a matrix where each descending diagonal from left to right is constant like following matrix:
$$A_n \, = \;
\begin{matrix}
k & n & k-1 & n-1 & \ddots & k+2 & 2 & k+1 & 1 \\
1 & k & \ddots & \ddots & \ddots & \ddots & k+2 & 2 & k+1 \\
k+1 & \ddots & k & \ddots & \ddots & n-1 & \ddots & k+2 & 2 \\
2 & \ddots & \ddots & \ddots & n & k-1 & n-1 & \ddots & k+2 \\
\ddots & \ddots & \ddots & 1 & k & n & \ddots & \ddots & \ddots \\
k-2 & \ddots & 2 & k+1 & 1 & \ddots & \ddots & \ddots & n-1 \\
n-1 & k-2 & \ddots & 2 & \ddots & \ddots & k & \ddots & k-1 \\
k-1 & n-1 & k-2 & \ddots & \ddots & \ddots & \ddots & k & n \\
n & k-1 & n-1 & k-2 & \ddots & 2 & k+1 & 1 & k \\
\end{matrix}$$

In particular, if $n=3$ and $k=2$, we get Latin square $A_3$ in Section 2. Since Latin square $B_3$ in Section 2 is obtained by reflecting $A_3$ along the 2nd column of $A_3$, it is natural to reflect $A_n$ along the $k$th column of $A_n$ as follows.

The sum of main diagonal and the sum of antidiagonal of $A_n$ are $n(n+1)/2$ since $\sum_{i=1}^{n} a_{ii}=n\times k=n(n+1)/2$ and $\sum_{i=1}^{n} a_{i(n+1-i)}=\sum_{i=1}^{n} i = n(n+1)/2$. Recall that $s_2(A)$ is the Latin square obtained by reflecting along the middle vertical line of $A_n$. Then $s_{2}(A_n)$ has the same sum of the main diagonal (and antidiagonal respectively) of $A_n$ since the trace of $A_n$, $tr(A_n)$ is the sum of antidiagonal (and main diagonal respectively) of $s_{2}(A_n)$.

Now it remains to show that $A$ and $s_{2}(A_n)$ are orthogonal. There is a one-to-one correspondence between pandiagonals of $A_n$ and line equations; let $y=x+\alpha$ be a line with $\alpha\in \mathbb{Z}_{n}$. Then each constant pandiagonal corresponds to each equation of line. For example, $y=x$ corresponds to the diagonal constant $k$ in $A_n$ since $k=a_{ij} \Leftrightarrow i=j$ in $\mathbb{Z}_{n}$. (i,e. $(i,j)$ is a root of $y=x$ in $\mathbb{Z}_{n}$). Similarly, $x=y-2$ corresponds to the constant $k-1$, $\cdots$,  $x=y-(n-1)$ corresponds to the constant 1. And $x=y+(n-1)$ corresponds to the constant $n$, $x=y+(n-3)$ corresponds to the constant $n-1$, $\cdots$, $x=y+2$ corresponds to the constant $k+1$. Then we can do this to $s_{1}(A)$; similarly, $x=-y$ corresponds to the constant $k$, $\cdots$,  $x=-y-(n-1)$ corresponds to the constant 1. And $x=-y+(n-1)$ corresponds to the constant $n, \cdots$ , $x=-y+2$ corresponds to the constant $k+1$. Any two lines $x=y+\alpha$ and $x=-y+\beta$ have exactly one unique root. It means that an entry $(a_{ij},a_{i(n+1-j)})$ appears only once.
\end{proof}

 By the above theorem, we get a magic square constructed from a pair of orthogonal Latin squares which are not double-diagonal. Although there are many other ways to construct magic squares, our method is the way Choi obtained magic squares from two orthogonal non-double-diagonal Latin squares.

However, we can ask a question ''What does happen if $n$ is even?'' It is well known that a pair of orthogonal Latin square does not exist when $n=2$ and $n=6$, and so it is more difficult to get an even order magic square consisting of a pair of Latin squares. So we construct magic squares of some even order cases in a different way.

\begin{lem}~\label{thm:knonecker_diagonal}
 Suppose that a Latin square $A_{1}$ of order $n$ has main diagonal and antidiagonal sums $n(n+1)/2$ respectively and that a Latin square $B_{1}$ of order $m$ has main diagonal and antidiagonal sums $m(m+1)/2$ respectively. Then $A_{1}\otimes B_{1}$ is a Latin square of order $mn$ with main diagonal and antidiagonal sums $nm(nm+1)/2$ respectively.
\end{lem}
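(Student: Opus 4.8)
The plan is to read $A_1 \otimes B_1$ as the substituted Kronecker product $A_1 \otimes_S B_1$, i.e.\ with each pair $(a_{ij}, b_{kl})$ replaced by $m(a_{ij}-1)+b_{kl}$, since only then is it meaningful to speak of diagonal sums. That $A_1 \otimes_S B_1$ is a Latin square of order $mn$ needs no new work: the Kronecker product of two Latin squares is a Latin square (the earlier unlabeled Lemma), and the map $(a,b) \mapsto m(a-1)+b$ is a bijection of $\{1,\dots,n\}\times\{1,\dots,m\}$ onto $\{1,\dots,mn\}$, so the substitution merely relabels symbols. Hence the whole content of the lemma is the two diagonal-sum computations. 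Before doing them I would pin down the index convention: the $mn$ rows (and columns) are labelled by pairs $(i,k)$ with $i\in\{1,\dots,n\}$, $k\in\{1,\dots,m\}$, the row $(i,k)$ being the $((i-1)m+k)$-th row; with this convention the entry of $C:=A_1\otimes_S B_1$ in row $(i,k)$, column $(j,l)$ is $c_{(i,k),(j,l)} = m(a_{ij}-1)+b_{kl}$.

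For the main diagonal: the diagonal cells are exactly those with $(j,l)=(i,k)$, so the main-diagonal entries are $m(a_{ii}-1)+b_{kk}$ as $i$ ranges over $\{1,\dots,n\}$ and $k$ over $\{1,\dots,m\}$. The computation I would then carry out is
\[
\sum_{i=1}^{n}\sum_{k=1}^{m}\bigl(m(a_{ii}-1)+b_{kk}\bigr)
 = m\left(\sum_{k=1}^{m}\left(\sum_{i=1}^{n}a_{ii}-n\right)\right) + n\sum_{k=1}^{m}b_{kk}
 = m^{2}\cdot\tfrac{n(n-1)}{2} + n\cdot\tfrac{m(m+1)}{2} = \tfrac{nm(nm+1)}{2},
\]
where the hypotheses $\sum_i a_{ii}=n(n+1)/2$ and $\sum_k b_{kk}=m(m+1)/2$ are used, together with $\tfrac{n(n+1)}{2}-n=\tfrac{n(n-1)}{2}$ and the factorisation $m(n-1)+(m+1)=mn+1$.

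For the antidiagonal the only step requiring a little care is translating ``antidiagonal of the big square'' into block data. The $((i-1)m+k)$-th row is paired, in the antidiagonal, with column $mn+1-\bigl((i-1)m+k\bigr)$, and a short rearrangement gives $mn+1-\bigl((i-1)m+k\bigr) = \bigl((n+1-i)-1\bigr)m + (m+1-k)$, i.e.\ the column pair $(n+1-i,\,m+1-k)$; since $i\mapsto n+1-i$ and $k\mapsto m+1-k$ are bijections of $\{1,\dots,n\}$ and $\{1,\dots,m\}$, the antidiagonal entries of $C$ are precisely $m(a_{i,n+1-i}-1)+b_{k,m+1-k}$ over all $i,k$. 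Summing these is formally identical to the main-diagonal computation above, now invoking the antidiagonal hypotheses $\sum_i a_{i,n+1-i}=n(n+1)/2$ and $\sum_k b_{k,m+1-k}=m(m+1)/2$, and again yields $nm(nm+1)/2$. This index bookkeeping for the antidiagonal is the only mild obstacle; everything else is the routine algebra already displayed.
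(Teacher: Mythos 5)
Your proposal is correct and follows essentially the same route as the paper: reduce the Latin-square claim to the earlier Kronecker-product lemma (the substitution being a relabeling), then compute the main-diagonal and antidiagonal sums blockwise as $m^2\bigl(\sum_i a_{ii}-n\bigr)+n\sum_k b_{kk}$ (and its antidiagonal analogue), which equals $nm(nm+1)/2$. The only difference is that you make the index bookkeeping for the antidiagonal explicit, which the paper leaves implicit in the phrase ``by definition of $A_1\otimes_S B_1$.''
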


\begin{proof}
The fact that $A_{1}\otimes_S B_{1}$ is a Latin square of order $mn$ follows from Theorem 2.2. It remains to show that the two sums give $nm(nm+1)/2$.

First we consider the sum of main diagonal of $A_{1}\otimes_S B_{1}$. By definition of $A_{1}\otimes_S B_{1}$, its main diagonal sum is equal to

$$ m\{m\sum_{i=1}^n a_{ii} -mn\}+ (\sum_{i=1}^m b_{ii})n = m^2 \left \{\frac{n(n+1)}{2} -n \right \} + \frac{mn(m+1)}{2}=\frac{mn(mn+1)}{2}.$$

Similarly its antidiagonal sum is equal to
$$ m\{m\sum_{i=1}^n a_{i(n+1-i)} -mn\}+ (\sum_{i=1}^m b_{i(n+1-i)})n  = m^2 \left \{ \frac{n(n+1)}{2} -n \right \} + \frac{mn(m+1)}{2}=\frac{mn(mn+1)}{2}.$$
This completes the proof.
\end{proof}

\begin{thm}
For every $n$ with $n \equiv 2 \pmod{4}$, there exists a pair of orthogonal Latin squares each of whose sum of main diagonal (and antidiagonal, respectively) is $n(n+1)/2$.
\end{thm}

\begin{proof}
Define four Latin squares by
$$A_{1} \, = \;
\begin{matrix}
1 & 4 & 3 & 2 \\
4 & 1 & 2 & 3 \\
2 & 3 & 4 & 1 \\
3 & 2 & 1 & 4 \\
\end{matrix}
\quad A_{2} \, = \;
\begin{matrix}
3 & 1 & 2 & 4 \\
4 & 2 & 1 & 3 \\
2 & 4 & 3 & 1 \\
1 & 3 & 4 & 2 \\
\end{matrix}$$
\\
$$B_{1} \, = \;
\begin{matrix}
1 & 5 & 8 & 4 & 2 & 6 & 7 & 3 \\
3 & 8 & 5 & 2 & 4 & 7 & 6 & 1 \\
8 & 3 & 2 & 5 & 7 & 4 & 1 & 6 \\
6 & 2 & 3 & 7 & 5 & 1 & 4 & 8 \\
2 & 6 & 7 & 3 & 1 & 5 & 8 & 4 \\
4 & 7 & 6 & 1 & 3 & 8 & 5 & 2 \\
7 & 4 & 1 & 6 & 8 & 3 & 2 & 5 \\
5 & 1 & 4 & 8 & 6 & 2 & 3 & 7 \\
\end{matrix}
\quad B_{2} \, = \;
\begin{matrix}
1 & 4 & 5 & 8 & 2 & 6 & 3 & 7 \\
4 & 1 & 8 & 5 & 6 & 2 & 7 & 3 \\
3 & 8 & 1 & 6 & 5 & 7 & 2 & 4 \\
8 & 3 & 6 & 1 & 7 & 5 & 4 & 2 \\
7 & 5 & 4 & 2 & 8 & 3 & 6 & 1 \\	
5 & 7 & 2 & 4 & 3 & 8 & 1 & 6 \\
6 & 2 & 7 & 3 & 4 & 1 & 8 & 5 \\
2 & 6 & 3 & 7 & 1 & 4 & 5 & 8 \\
\end{matrix}$$

Then $A_{1}$ and $A_{2}$ are orthogonal. $B_{1}$ and $B_{2}$ are also orthogonal. So we can construct two orthogonal Latin squares of order $4k$ (where $k$ is an integer) using the following way.

If we want to construct of orthogonal Latin squares of order $4t$ with $t$ odd, then we can make two Latin squares $A_1\otimes_S C_1$ and $A_2\otimes_S C_2$ where $C_1$ and $C_2$ are orthogonal Latin squares of order $t$ and the sums of diagonal and antidiagonal are $t(t+1)/2$ (By Theorem~\ref{thm:odd}, we can get such pair of Latin squares). Then $A_1\otimes_S C_1$ and $A_2\otimes_S C_2$ are orthogonal by Theorem 2.2 and each sum of their diagonal and antidiagonal is $4t(4t+1)/2$ by Lemma 4.4.

Or if we want to construct orthogonal Latin squares of order $2^p$ with $p\geq3$, we recursively use the substituted Kronecker products of  $A_1, B_1, A_2,$ and $B_2$.

So we can construct orthogonal Latin Squares of an even order $n$ which is not of the form of $2r$ ($r$ is odd) each of whose sum of diagonal (and antidiagonal, respectively) is $n(n+1)/2$.
\end{proof}

\begin{cor}
For any integer $n$ with $n\neq2r$ where $r$ is odd, there exists a pair of non-double-diagonal orthogonal Latin Squares of order $n$ such that the pair of Latin squares can produce a magic square of order $n$.
\end{cor}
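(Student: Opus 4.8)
The plan is to assemble the three existence results already in hand. For $n$ odd with $n\ge 3$ (so automatically $n\neq 2r$ with $r$ odd), Theorem~\ref{thm:odd} produces the orthogonal pair $A_n,\,s_2(A_n)$ of order $n$ whose main diagonal sum and antidiagonal sum are each $n(n+1)/2$; for $n$ even with $n\neq 2r$ ($r$ odd) --- equivalently $4\mid n$ --- Theorem~4.5 produces an orthogonal pair $A,B$ of order $n$ with the same two sums. In either case Theorem~\ref{thm:main_anti} applies directly and shows that $A+_S B$ is a magic square of order $n$. So the only point to verify is that the pair can be taken to be nondiagonal; everything else is a citation, and I would simply set aside the trivial case $n\le 2$ (recalling $n=2$ is excluded by hypothesis).

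For the nondiagonality I would inspect the explicit squares used in those two construction theorems. In Theorem~\ref{thm:odd} the square $A_n$ is built so that each descending diagonal is constant, so its main diagonal is the constant string $k,k,\dots,k$; since $n\ge 2$ this has a repeated entry, hence $A_n$ is not a diagonal Latin square, and dually the antidiagonal of $s_2(A_n)$ is the constant string $k,k,\dots,k$, so $s_2(A_n)$ is not diagonal either. In Theorem~4.5 the order-$4$ seeds $A_1,A_2$ and the order-$8$ seeds $B_1,B_2$ each have, by direct inspection, a repeated entry both on the main diagonal and on the antidiagonal (for instance $A_1$ reads $1,1,4,4$ down its main diagonal and $2,2,3,3$ up its antidiagonal).

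It then remains to see that this property passes through the substituted Kronecker products used to build the order-$4t$ and order-$2^p$ squares in Theorem~4.5. The key observation is that if the outer factor $X$ of a product $X\otimes_S Y$ (with $Y$ of order $m$) has two equal main-diagonal entries $x_{ii}=x_{jj}$ with $i\neq j$, then $X\otimes_S Y$ has a repeated main-diagonal entry too: the diagonal blocks in block-positions $(i,i)$ and $(j,j)$ place the common value $m(x_{ii}-1)+y_{11}$ at the two genuine main-diagonal positions $((i-1)m+1,(i-1)m+1)$ and $((j-1)m+1,(j-1)m+1)$. The same statement with antidiagonals in place of main diagonals holds verbatim. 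Since $A_1,A_2,B_1,B_2$ enjoy the repetition on both diagonals, an induction on the depth of the Kronecker construction shows that every square produced in the proof of Theorem~4.5 is nondiagonal; the factors $C_1,C_2$ coming from Theorem~\ref{thm:odd} sit only as inner factors, so they do not interfere.

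Putting these together, for every admissible $n$ we obtain an orthogonal pair of nondiagonal Latin squares $A,B$ of order $n$ with $A+_S B$ a magic square, which is the assertion. The only genuine work is in the third paragraph --- pinning down ``nondiagonality is inherited by $\otimes_S$'' and checking that it survives each recursive step of Theorem~4.5 --- and this is routine; the remainder is just the case split on the parity of $n$ together with a direct appeal to Theorems~\ref{thm:odd},~4.5 and~\ref{thm:main_anti}.
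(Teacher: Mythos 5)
Your proposal is correct and follows essentially the same route as the paper: the paper's proof is a one-line assembly of Theorems~\ref{thm:main_anti},~\ref{thm:odd} and~4.5, exactly as in your first paragraph. Your additional verification that the constructed squares are genuinely nondiagonal (the constant main diagonal of $A_n$, the repeated diagonal entries of $A_1,A_2,B_1,B_2$, and the inheritance of repeated diagonal/antidiagonal entries under $\otimes_S$ from the outer factor) is a correct and worthwhile supplement, since the paper's proof asserts nondiagonality without comment.
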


\begin{proof}
By Theorems 4.2, 4.3, and 4.5, we can construct a magic square of order $n$ where $n\neq2r$ ($r$ is odd).
\end{proof}

Therefore, Choi's orthogonal Latin squares of various orders give a new way to construct magic squares based on non-double-diagonal orthogonal Latin squares.

\end{document}